\newtheorem{theorem}{Theorem}[section]
\newtheorem{lemma}[theorem]{Lemma}
\newtheorem{conjecture}[theorem]{Conjecture}
\newtheorem{proposition}[theorem]{Proposition}
\newtheorem{problem}[theorem]{Problem}
\numberwithin{equation}{section}
\newcommand{\N}{\mathbb N}
\newcommand{\Z}{\mathbb Z}
\newcommand{\R}{\mathbb R}
\newcommand{\Q}{\mathbb Q}
\title{Threshold numbers of some graphs}
\author{Runze Wang}
\address[]{Department of Mathematical Sciences, University of Memphis, Memphis, TN 38152, USA}
\email{runze.w@hotmail.com}
\thanks{}
\date{\today}
\subjclass[]{}
\begin{document}

\sloppy

\begin{abstract}
   A graph $G=(V,E)$ is called a \emph{$k$-threshold graph} with \emph{thresholds} $\theta_1<\theta_2<...<\theta_k$ if we can assign a real number $r(v)$ to each vertex $v\in V$, such that for any $u,v\in V$, we have $uv\in E$ if and only if $r(u)+r(v)\ge \theta_i$ holds true for an odd number of elements in $\{\theta_1,\theta_2,...,\theta_k\}$. The smallest integer $k$ such that $G$ is a $k$-threshold graph is called the \emph{threshold number} of $G$. For the complete multipartite graphs and the cluster graphs, Kittipassorn and Sumalroj determined the exact threshold numbers of $K_{n\times 3}$ and $nK_3$. In this paper, first we determine the threshold numbers of some path-related graphs, including linear forests, ladders, and tents. Then, on the basis of Kittipassorn and Sumalroj's results, we determine the exact threshold numbers of $K_{n_1\times 1, n_2\times 2, n_3\times 3}$ and $n_1 K_1\cup n_2 K_2\cup n_3 K_3$, which solve a problem proposed by Sumalroj.
\end{abstract}

\maketitle

\section{Introduction}

In this paper, every graph is assumed to be a finite simple graph, and we will use the following notations:
\begin{itemize}
    \item $[a,b]:=\{i\in \Z: a\le i\le b\}$ for $a,b\in \Z$ with $a<b$.
    \item $[c]:=\{j\in \N: 1\le j\le c\}$ for $c\in \N$.
    \item $K_n:=$ the complete graph with $n$ vertices.
    \item $K_{m_1,m_2,...,m_k}:=$ the complete $k$-partite graph with part sizes $m_1,m_2,...,m_k$.
    \item $K_{n_1\times 1, n_2\times 2, ..., n_t\times t}:=$ the complete $(n_1+n_2+...+n_t)$-partite graph with $n_1$ parts having size $1$, $n_2$ parts having size $2$, ..., $n_t$ parts having size $t$.
    \item $K_{m_1}\cup K_{m_2}\cup ...\cup K_{m_k}:=$ the cluster graph with cluster sizes $m_1,m_2,...,m_k$.
    \item $n_1 K_1\cup n_2 K_2\cup ...\cup n_t K_t:=$ the cluster graph with $n_1$ clusters having size $1$, $n_2$ clusters having size $2$, ..., $n_t$ clusters having size $t$.
\end{itemize}

Note that for fixed $t$ and $n_1,n_2,...,n_t$, we have that $n_1 K_1\cup n_2 K_2\cup ...\cup n_t K_t$ is the complement of $K_{n_1\times 1, n_2\times 2, ..., n_t\times t}$.

A graph $G=(V,E)$ is called a \emph{threshold graph} if we can assign a real number $r(v)$ to each vertex $v\in V$, such that for any two vertices $u,v\in V$, we have $uv\in E$ if and only if $r(u)+r(v)\ge 0$. The class of threshold graphs has been extensively studied (see \cite{Go,GJ,GT,HMP,MP}) since it was introduced by Chv\'atal and Hammer \cite{CH77} in 1977.

Jamison and Sprague \cite{JS20} generalized threshold graphs and introduced multithreshold graphs. A graph $G=(V,E)$ is called a \emph{$k$-threshold graph} with \emph{thresholds} $\theta_1<\theta_2<...<\theta_k$ if we can assign a real number $r(v)$ to each vertex $v\in V$, such that for any two vertices $u,v\in V$, we have $uv\in E$ if and only if $r(u)+r(v)\ge \theta_i$ holds true for an odd number of elements in $\{\theta_1,\theta_2,...,\theta_k\}$. We call $r(v)$ the \emph{rank} of $v$. And such a rank assignment $r:V\longrightarrow \R$ is called a \emph{$(\theta_1,\theta_2,...,\theta_k)$-representation} of $G$.

Jamison and Sprague \cite{JS20} proved that every graph with order $n$ is a $k$-threshold graph for some $k\le{n\choose 2}$. So for every finite graph $G$, there is a smallest positive integer $\Theta(G)$, such that $G$ is a $\Theta(G)$-threshold graph. We call $\Theta(G)$ the \emph{threshold number} of $G$.

If $G$ is a $k$-threshold graph with a $(\theta_1,\theta_2,...,\theta_k)$-representation, $H$ is an induced subgraph of $G$, then if we let every vertex in $H$ inherit its rank in the $(\theta_1,\theta_2,...,\theta_k)$-representation of $G$, we will get a $(\theta_1,\theta_2,...,\theta_k)$-representation of $H$. So we can make the following observation.

\begin{proposition} \label{prop}
    Let $G$ be a graph and let $H$ be an induced subgraph of $G$, then $\Theta(H)\le \Theta(G)$.
\end{proposition}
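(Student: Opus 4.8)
The plan is to exhibit a representation of $H$ directly from one of $G$, exactly along the lines of the remark immediately preceding the statement. First I would set $k := \Theta(G)$ and fix a $(\theta_1,\theta_2,\dots,\theta_k)$-representation $r\colon V(G)\to\R$ of $G$, which exists by the definition of the threshold number. Then I would let $r'\colon V(H)\to\R$ be the restriction of $r$ to $V(H)$. The claim is that $r'$ is a $(\theta_1,\theta_2,\dots,\theta_k)$-representation of $H$ \emph{with the same thresholds}; once this is established, $H$ is a $k$-threshold graph by definition, and hence $\Theta(H)\le k=\Theta(G)$, which is what we want.

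To verify the claim I would take any two vertices $u,v\in V(H)$ and chase the equivalences in order. Since $H$ is an \emph{induced} subgraph of $G$, we have $uv\in E(H)$ if and only if $uv\in E(G)$. Since $r$ is a representation of $G$, the latter holds if and only if $r(u)+r(v)\ge\theta_i$ for an odd number of indices $i\in[k]$. Finally, because $r'(u)=r(u)$ and $r'(v)=r(v)$, this last condition is precisely the defining condition for $r'$ applied to the pair $u,v$. Composing these three ``if and only if'' statements gives the claim.

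There is essentially no obstacle in this argument; the only point that genuinely matters is that $H$ is an induced subgraph, and this hypothesis is used exactly at the first of the three equivalences above. I would make a point of flagging that step: for a non-induced subgraph the equality $E(H)=E(G)\cap\binom{V(H)}{2}$ fails, and the conclusion of the proposition can fail as well, so the word ``induced'' cannot be dropped.
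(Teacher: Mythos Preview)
Your proof is correct and is exactly the approach the paper takes: the paper simply observes (in the sentence immediately preceding the proposition) that letting each vertex of $H$ inherit its rank from a $(\theta_1,\dots,\theta_k)$-representation of $G$ yields a $(\theta_1,\dots,\theta_k)$-representation of $H$, and states the proposition without further argument. Your write-up fleshes out this observation with the explicit chain of equivalences, but the idea is identical.
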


Jamison and Sprague \cite{JS20} showed the threshold number of a graph is very close to the threshold number of its complement.

\begin{theorem}[Jamison and Sprague \cite{JS20}]
    Let $G$ be a graph and let $G^c$ be the complement of $G$, then

    (I) $|\Theta(G)-\Theta(G^c)|\le 1$.

    (II) If $\Theta(G)$ is odd, then $\Theta(G^c)=\Theta(G)$ or $\Theta(G^c)=\Theta(G)-1$.

    (III) If $\Theta(G)$ is even, then $\Theta(G^c)=\Theta(G)$ or $\Theta(G^c)=\Theta(G)+1$.
\end{theorem}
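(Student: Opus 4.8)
The plan is to deduce all three statements from two elementary operations on threshold representations, after which the result becomes pure parity bookkeeping. Write $a=\Theta(G)$ and $b=\Theta(G^c)$. First I would record a harmless normalization: given any $(\theta_1,\dots,\theta_k)$-representation $r$ of a graph, we may assume $r(u)+r(v)\neq\theta_i$ for all $i$ and all distinct $u,v$, since there are only finitely many pair-sums and replacing each $\theta_i$ by $\theta_i-\varepsilon$ for a sufficiently small generic $\varepsilon>0$ (avoiding the finitely many values that would create a new tie) changes neither the truth values of the inequalities $r(u)+r(v)\geq\theta_i$ nor the ordering of the thresholds.

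\textbf{Operation A (negation).} If $G$ has a $(\theta_1,\dots,\theta_k)$-representation $r$ with $k$ odd, put $r'(v)=-r(v)$ and $\phi_j=-\theta_{k+1-j}$, so that $\phi_1<\cdots<\phi_k$. For distinct $u,v$ we have $r'(u)+r'(v)\geq\phi_j$ iff $r(u)+r(v)\leq\theta_{k+1-j}$, which by the normalization is iff $r(u)+r(v)<\theta_{k+1-j}$; hence the number of $j$ with $r'(u)+r'(v)\geq\phi_j$ equals $k-\bigl|\{\,i:r(u)+r(v)\geq\theta_i\,\}\bigr|$, which has parity opposite to $\bigl|\{\,i:r(u)+r(v)\geq\theta_i\,\}\bigr|$ because $k$ is odd. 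Thus $r'$ with thresholds $\phi_1<\cdots<\phi_k$ is a $k$-threshold representation of $G^c$. Consequence: if $a$ is odd then $b\leq a$, and applying the same to $G^c$, if $b$ is odd then $a\leq b$.

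\textbf{Operation B (parity bump).} If a graph $H$ has a $(\theta_1,\dots,\theta_k)$-representation $r$, choose $\theta_0$ smaller than every pair-sum; then $r$ with the $k+1$ thresholds $\theta_0<\theta_1<\cdots<\theta_k$ realizes every pair-count exactly one larger, flipping all parities, and hence represents $H^c$. So a $k$-threshold graph always has a complement that is a $(k+1)$-threshold graph. Consequence: if $a$ is even then $b\leq a+1$, and applying this to $G^c$, if $b$ is even then $a\leq b+1$. (One may alternatively note that adding a threshold above all pair-sums shows a $k$-threshold graph is $k'$-threshold for every $k'\geq k$; combined with Operation A this re-derives Operation B's consequence, but the direct argument is cleaner since it needs no normalization.)

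\textbf{Conclusion.} It remains to run these four consequences through a short case analysis. If $a$ is odd, then $b\leq a$; moreover $b\leq a-2$ is impossible, since it would force $a\leq b\leq a-2$ when $b$ is odd, or $a\leq b+1\leq a-1$ when $b$ is even; hence $b\in\{a-1,a\}$, which is (II). If $a$ is even, then $b\leq a+1$; and $b\leq a-1$ is impossible, since $b$ odd gives $a\leq b\leq a-1$ while $b$ even gives $b\leq a-2$ and then $a\leq b+1\leq a-1$; hence $b\in\{a,a+1\}$, which is (III). Finally, (I) is immediate from (II) and (III). The only genuinely delicate point is Operation A — ensuring that ties between pair-sums and thresholds do not corrupt the parity count — which is precisely what the normalization step handles; everything else is routine.
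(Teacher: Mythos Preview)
The paper does not prove this theorem; it is stated in the introduction as a result of Jamison and Sprague and cited from \cite{JS20} without proof. So there is no in-paper argument to compare your attempt against.

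That said, your proof is correct. The normalization step is sound (shifting all thresholds down by a common small generic $\varepsilon$ preserves every inequality $r(u)+r(v)\ge\theta_i$ and breaks any equalities), Operation~A correctly shows that a $k$-threshold representation with $k$ odd negates to a $k$-threshold representation of the complement, and Operation~B correctly shows that prepending a threshold below all pair-sums turns any $k$-threshold representation of $H$ into a $(k+1)$-threshold representation of $H^c$.

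One minor simplification is available: Operation~B as you describe it does not depend on the parity of $k$, so it already gives $b\le a+1$ and, by symmetry, $a\le b+1$; this is part~(I) outright, with no case analysis needed. Once (I) is in hand, (II) follows immediately from Operation~A (odd $a$ gives $b\le a$, and (I) gives $b\ge a-1$), and (III) follows since $b=a-1$ with $a$ even would make $b$ odd, whence Operation~A applied to $G^c$ yields $a\le b=a-1$, a contradiction. Your longer case split reaches the same conclusions and is also valid; it just does a bit more work than necessary because you only invoked Operation~B's consequence in the even case.
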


On the threshold numbers of specific graphs, Jamison and Sprague \cite{JS20} showed that the threshold numbers of paths and caterpillars are both at most two, where a caterpillar consists of a path and some leaves attached to the vertices in the path; Wang \cite{Wa} determined the threshold numbers of cycles.

In this paper, we will determine the threshold numbers of some variants of paths, including linear forests, ladders, and tents.

Jamison and Sprague \cite{JS20} also proved a general upper bound on the threshold numbers of the complete multipartite graphs.

\begin{theorem}[Jamison and Sprague \cite{JS20}]
    Let $G$ be a complete $k$-partite graph, then
    \begin{align*}
        \Theta(G)\le 2k.
    \end{align*}
\end{theorem}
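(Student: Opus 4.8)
The plan is to prove the bound $\Theta(G)\le 2k$ by induction on the number of parts $k$: I would build $G=K_{m_1,\dots,m_k}$ one part at a time, paying exactly two additional thresholds for each new part. The mechanism I have in mind is a \emph{separation of scales}: when attaching a new part I would give all of its vertices one common, enormous rank $R$, chosen so large that every sum involving a vertex of the new part exceeds every threshold already in use and every sum already realized among the old vertices; the two fresh thresholds are then slotted into the gap this creates, so that they flip the adjacency parity precisely for the new cross edges and for the new within-part non-edges, and for nothing else.

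In detail, for the inductive step I would take a $(\theta_1,\dots,\theta_{2k-2})$-representation $r'$ of $G'=K_{m_1,\dots,m_{k-1}}$ and, after translating all ranks by a constant (and, correspondingly, all thresholds by twice as much), assume the old ranks are nonnegative with minimum $0$ and maximum $r_{\max}$. I would then keep all old ranks and all old thresholds, choose $R>\max\{2r_{\max},\theta_{2k-2}\}$, assign rank $R$ to every vertex of the new part $V_k$, and append two thresholds $\theta_{2k-1}\in(\max\{2r_{\max},\theta_{2k-2}\},R]$ and $\theta_{2k}\in(R+r_{\max},2R]$. The verification splits into three cases. For two old vertices the sum is at most $2r_{\max}<\theta_{2k-1}$, so no new threshold is at or below it, the count is unchanged, and adjacency is correct by induction. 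For an old vertex $u$ and a vertex of $V_k$ the sum is $r'(u)+R\in[R,R+r_{\max}]$, which lies above all $2k-2$ old thresholds, is at least $\theta_{2k-1}$, and is strictly below $\theta_{2k}$, so the count is $2k-1$ (odd) and the pair is an edge — as it must be, since $V_k$ is completely joined to the rest. For two vertices of $V_k$ the sum is $2R$, which is at least every one of the $2k$ thresholds, so the count is $2k$ (even) and the pair is a non-edge — again as it must be. The base case $k=1$ is trivial: $K_{m_1}$ is edgeless, so giving every vertex rank $0$ and taking $\theta_1=1<\theta_2=2$ makes every sum $0$, every count $0$, and yields no edges, so $\Theta(K_{m_1})\le 2$. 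Iterating the step then gives $\Theta(K_{m_1,\dots,m_k})\le 2k$.

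The point I would be most careful about is that the two new thresholds must do two things at once — make the cross sums $r'(u)+R$ acquire an odd count while keeping the single value $2R$ at an even count — without disturbing the counts of any old sums. That is exactly why $\theta_{2k-1}$ is forced to lie strictly above the largest old sum but at or below every cross sum, and why $\theta_{2k}$ is forced to lie strictly above every cross sum but at or below $2R$; the real substance of the argument is then just checking that these two windows are nonempty, which is where $R$ being far larger than $r_{\max}$ and $\theta_{2k-2}$ (together with the harmless normalization to nonnegative ranks) is used. I would also note in passing that one can do slightly better by ordering the parts and giving part $j$ the common super-increasing rank $2^{j}$: the pairwise sums then fall into clean consecutive blocks and one needs only $2k-2$ thresholds — but the inductive construction above is the most transparent route to the bound as stated.
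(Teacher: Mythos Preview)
Your argument is correct, but note that the paper does not actually prove this theorem: it is quoted from Jamison and Sprague \cite{JS20} as background and no proof is given anywhere in the present paper. So there is no ``paper's own proof'' to compare against.

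That said, your inductive scale-separation construction is sound. The normalization step (translate ranks by $c$, thresholds by $2c$) is legitimate; the choice $R>\max\{2r_{\max},\theta_{2k-2}\}$ guarantees both new windows $(\max\{2r_{\max},\theta_{2k-2}\},R]$ and $(R+r_{\max},2R]$ are nonempty and correctly ordered; and your three-case check verifies the parity of threshold counts exactly. The only cosmetic point is that in the base case $k=1$ the complete $1$-partite graph is edgeless, so in fact $\Theta(K_{m_1})=0$ (the paper itself uses threshold number $0$ for edgeless graphs despite earlier saying ``smallest positive integer''), but of course $0\le 2$ and the induction proceeds unchanged.

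Your closing remark about the super-increasing assignment $r(v)=2^{j}$ for $v$ in part $j$ is also correct: the pairwise sums then fall into $2k-1$ consecutive blocks alternating between within-part and cross-part, so $2k-2$ thresholds suffice. This is essentially the idea behind the sharper bound $\Theta(K_{m_1,\dots,m_k})\le 2k-2$ that Chen and Hao later exploit.
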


They put forward the following problem and conjecture.

\begin{problem}[Jamison and Sprague \cite{JS20}] \label{probjs}
    Determine the exact threshold numbers of $K_{m_1,m_2,...,m_k}$.
\end{problem}

\begin{conjecture}[Jamison and Sprague \cite{JS20}] \label{conj}
    For every integer $k\ge 1$, there is a graph $G$ with $\Theta(G)=2k$ and $\Theta(G^c)=2k+1$.
\end{conjecture}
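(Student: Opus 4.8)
The plan is to deduce Conjecture~\ref{conj} from the exact threshold numbers of the complete multipartite graphs $K_{n_1\times 1,\,n_2\times 2,\,n_3\times 3}$ and of their complements, the cluster graphs $n_1K_1\cup n_2K_2\cup n_3K_3$ — the two families whose threshold numbers this paper pins down, building on Kittipassorn and Sumalroj's computation of $\Theta(K_{n\times 3})$ and $\Theta(nK_3)$. The one‑parameter family $K_{m\times 3}$ alone may well skip some even values, so I would keep $n_3$ (the number of triangles) as the coarse parameter that drives the threshold number upward and use $n_1,n_2$ for fine tuning: for each $k\ge 1$, choose $n_3$ so that the threshold number lands near $2k$, then adjust $n_1$ and $n_2$ so that $\Theta\bigl(K_{n_1\times 1,\,n_2\times 2,\,n_3\times 3}\bigr)=2k$ exactly while $\Theta\bigl(n_1K_1\cup n_2K_2\cup n_3K_3\bigr)=2k+1$. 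Writing $G=G_k$ for the resulting complete multipartite graph, this settles the conjecture.

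Everything then reduces to the upper‑bound construction for $G_k$ (which I expect to be routine — an explicit $(\theta_1,\dots,\theta_{2k})$‑representation in which each part gets a single rank and the $2k$ thresholds are chosen, in the style of Kittipassorn and Sumalroj, so that the half‑open intervals they cut out of $\R$ separate all within‑part sums from all across‑part sums, giving $\Theta(G_k)\le 2k$) together with two genuine lower bounds: $\Theta(G_k)\ge 2k$ and $\Theta(G_k^c)\ge 2k+1$. Indeed, once $\Theta(G_k)=2k$ is established, part~(III) of Jamison and Sprague's complement theorem applies (since $2k$ is even) and yields $\Theta(G_k^c)\le 2k+1$, so the second lower bound forces $\Theta(G_k^c)=2k+1$. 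So the conjecture is equivalent to the two lower bounds.

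For the lower bounds I would run the standard multithreshold argument: assume a $(\theta_1<\dots<\theta_s)$‑representation $r$, sort the vertices so that $r(v_1)\le\dots\le r(v_N)$, and attach to each pair $\{v_i,v_j\}$ its level $\ell(i,j)\in\{0,1,\dots,s\}$, the number of thresholds that are $\le r(v_i)+r(v_j)$; adjacency of $v_i$ and $v_j$ is then the parity of $\ell(i,j)$, and $\ell$ is nondecreasing under increasing either index. Since in $G_k$ each part is an independent set of twins and in $G_k^c$ each cluster is a clique of twins, in either case this monotone level function must pack many forced parity alternations into a bounded band of levels, and a counting argument — Kittipassorn and Sumalroj's, generalized to mixed part sizes — forces $s\ge 2k$ for $G_k$. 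The delicate point, and the main obstacle, is the complement bound $\Theta(G_k^c)\ge 2k+1$: the complement theorem only tells us this number is $2k$ or $2k+1$, so one must exhibit an obstruction to a $2k$‑threshold representation of the cluster graph that has no analogue for $G_k$ itself. I expect it to be a parity phenomenon at the two ends of the sorted rank sequence — with an even number of thresholds the smallest and the largest pairwise sums are forced to sit at levels of a prescribed parity, and the triangles of $G_k^c$ (as opposed to the co‑triangles inside $G_k$) cannot be accommodated consistently under those endpoint constraints — so the bulk of the work is carrying out that endpoint analysis precisely and uniformly in $k$ to produce the clean gap of exactly one.
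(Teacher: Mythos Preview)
The paper does not supply its own proof of this conjecture. It states, immediately after quoting Chen and Hao's theorem, that ``this theorem also confirmed Conjecture~\ref{conj}'': taking $k+1$ parts each of size at least $k+2$ gives a complete multipartite graph $G$ with $\Theta(G)=2(k+1)-2=2k$ and $\Theta(G^c)=2(k+1)-1=2k+1$, for every $k\ge 1$. Theorem~\ref{main} is only said to ``give more examples confirming'' the conjecture, not a complete proof of it.

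Your proposed route through Theorem~\ref{main} has a genuine gap. The fine-tuning idea --- fixing $n_3$ and then adjusting $n_1,n_2$ to hit the desired value --- cannot work, because Theorem~\ref{main} shows that once $n_3\ge 3$ (for the multipartite side) or $n_3\ge 2$ (for the cluster side) the threshold number is a function of $n_3$ alone; adding $K_1$- or $K_2$-parts changes nothing. And in fact the family misses $k=3$ entirely. With $G=K_{n_1\times 1,\,n_2\times 2,\,n_3\times 3}$, the only way to get $\Theta(G)=6$ is $n_3=p_2=4$, but then $q_2+1\le 4\le q_3-1$ forces $\Theta(G^c)=6$, not $7$; the only way to get $\Theta(G^c)=7$ is $n_3=q_3=5$, but then $\Theta(G)=7$. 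Taking $G$ to be the cluster graph instead does not help either: the two relevant ranges $q_{m-1}+1\le n_3\le q_m-1$ and $p_{m-1}+1\le n_3\le p_m-1$ intersect in $[p_{m-1}+1,\,p_m-2]$, which is empty when $m=3$ since $p_3-p_2=1+\binom{2}{2}=2$. So no choice of $(n_1,n_2,n_3)$ produces the pair $(6,7)$, and the family in Theorem~\ref{main} cannot verify the conjecture for every $k$; one must fall back on Chen and Hao's result (or, for the specific value $k=3$, on Theorem~\ref{ks4} with $n=4$).
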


Chen and Hao \cite{CH22} gave a partial solution of Problem \ref{probjs} by determining the threshold numbers of the complete multipartite graphs with each part not being small, and the threshold numbers of their complements --- the cluster graphs with each cluster not being small.

\begin{theorem}[Chen and Hao \cite{CH22}] 
Let $m_1,m_2,...,m_k$ be $k$ positive integers, if $m_i\ge k+1$ for any $i\in [k]$, then
\begin{align*}
    \Theta(K_{m_1,m_2,...,m_k})=2k-2,
\end{align*}
and
\begin{align*}
    \Theta(K_{m_1}\cup K_{m_2}\cup ...\cup K_{m_k})=2k-1.
\end{align*}
\end{theorem}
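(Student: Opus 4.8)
The plan is to derive both equalities from three ingredients: an explicit $(2k-2)$-threshold representation of $K_{m_1,\dots,m_k}$, an explicit $(2k-1)$-threshold representation of $K_{m_1}\cup\cdots\cup K_{m_k}$, and the matching lower bound $\Theta(K_{m_1}\cup\cdots\cup K_{m_k})\ge 2k-1$. The remaining lower bound is then free: since $2k-1$ is odd, part (II) of Jamison and Sprague's complement theorem gives $\Theta(K_{m_1,\dots,m_k})=\Theta\big((K_{m_1}\cup\cdots\cup K_{m_k})^c\big)\in\{2k-2,2k-1\}$, which together with the $(2k-2)$-threshold construction pins it to $2k-2$.

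For the two constructions I would use super-increasing ranks. Fix a base $B$ large compared with $k$ and $\max_i m_i$, and put the ranks of the vertices of the $i$-th part (resp.\ cluster) in a tiny window around $a_i:=B^i$, perturbed to be pairwise distinct. Every within-part sum then falls in a tiny window around $2a_i$, every cross sum between the $i$-th and $j$-th parts ($i<j$) falls in a tiny window around $a_i+a_j$, and for $B$ large and the windows narrow these windows are pairwise disjoint and occur in the order
\[2a_1<a_1+a_2<2a_2<a_1+a_3<a_2+a_3<2a_3<\cdots<a_{k-1}+a_k<2a_k.\]
For $K_{m_1,\dots,m_k}$ we need the parity function to be $0$ on the $2a_i$-windows and $1$ on the cross-windows; since the parity function is $0$ near $-\infty$, along the above order this forces exactly $2k-2$ sign changes, and placing one threshold in each of the corresponding gaps yields a valid representation. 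For the cluster graph the required parities are interchanged, which forces one extra sign change on the far left (the parity function must already equal $1$ on the $2a_1$-window), so $2k-1$ thresholds suffice.

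For the lower bound $\Theta(K_{m_1}\cup\cdots\cup K_{m_k})\ge 2k-1$, take any $(\theta_1,\dots,\theta_s)$-representation and let $f\colon\R\to\{0,1\}$ be its parity function, so $uv\in E$ iff $f(r(u)+r(v))=1$. In the cluster graph a sum of ranks of two distinct vertices lies in $\{f=1\}$ precisely when the two vertices are in the same cluster, and $\{f=1\}$ has exactly $\lceil s/2\rceil$ connected components; hence it suffices to show that $\{f=1\}$ meets at least $k$ of them. I would prove the stronger statement, by induction on $j$, that the within-cluster sums coming from any $j$ of the $k$ clusters already meet at least $j$ components of $\{f=1\}$. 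In the inductive step I would peel off one extremally chosen cluster $C^*$ (for instance the one whose largest within-cluster sum $\sigma^*$ is largest) and exhibit a cross sum $\tau$ — hence a point of $\{f=0\}$ — with $\sigma'<\tau<\sigma^*$, where $\sigma'$ is the largest within-cluster sum of the remaining $j-1$ clusters; since $\tau$ separates, the component of $\{f=1\}$ containing $\sigma^*$ is distinct from the at least $j-1$ components supplied by the induction hypothesis. The hypothesis $m_i\ge k+1$ enters exactly here: each cluster contributes at least $k$ distinct ranks, hence at least $k$ within-cluster sums, and, more importantly, the rigidity that a within-cluster sum can never equal a cross sum (they receive different $f$-values) limits how the rank ranges of two clusters can interleave, which is what makes a separating sum $\tau$ available.

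The main obstacle I anticipate is precisely the existence of this $\tau$: because two clusters' rank ranges can overlap, choosing which cluster to peel and verifying that a separating cross sum exists appears to require a careful case analysis — driven by comparing the two largest ranks among the clusters in play — plus routine bookkeeping for degeneracies (several vertices of one cluster sharing a rank, or two distinct pairs of ranks producing the same sum). Once $\Theta(K_{m_1}\cup\cdots\cup K_{m_k})=2k-1$ is established, combining it with the construction $\Theta(K_{m_1,\dots,m_k})\le 2k-2$ and part (II) of the complement theorem finishes the proof.
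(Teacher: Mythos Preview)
This theorem is not proved in the paper: it is stated in the introduction and attributed to Chen and Hao \cite{CH22} as background for the paper's own results, with no argument supplied. There is therefore no in-paper proof to compare your proposal against.

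As a standalone sketch, your plan is a reasonable outline---explicit super-increasing constructions for the two upper bounds, a component-counting argument for the cluster lower bound, and the complement theorem to transfer the remaining lower bound---but, as you yourself flag, the inductive step producing a separating cross sum $\tau$ is the crux and is not yet justified. In particular, your assertion that ``each cluster contributes at least $k$ distinct ranks'' is not automatic: vertices in the same cluster are pairwise adjacent, so nothing prevents them from all receiving the same rank. The hypothesis $m_i\ge k+1$ must therefore enter in a different way than you describe, and your sketch does not yet isolate the mechanism. To turn this into a proof you would need to work out that step carefully or consult \cite{CH22} directly.
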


This theorem also confirmed Conjecture \ref{conj}.

For the complete multipartite graphs with small parts and the cluster graphs with small clusters, people have studied those graphs with equal-size parts/clusters. Puleo \cite{Pu} proved a lower bound on the threshold numbers of $nK_3$.

\begin{theorem}[Puleo \cite{Pu}]
    $\Theta(nK_3)\ge n^{1/3}$.
\end{theorem}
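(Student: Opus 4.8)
The plan is to assume that $nK_3$ has a $(\theta_1,\dots,\theta_k)$-representation $r$ and to deduce $n\le\lceil k/2\rceil^{3}$; applied with $k=\Theta(nK_3)$ this gives $\Theta(nK_3)\ge 2n^{1/3}-1\ge n^{1/3}$. For $x\in\R$ let $f(x)$ be the number of thresholds $\theta_i$ with $\theta_i\le x$; then $f$ is nondecreasing with values in $\{0,1,\dots,k\}$, and $uv\in E$ exactly when $f(r(u)+r(v))$ is odd. Call $f(x)$ the \emph{level} of $x$; of the $k+1$ possible levels, exactly $\lceil k/2\rceil$ are odd. Label the triangles $1,\dots,n$ and write the three vertex ranks of triangle $j$ as $a_j\le b_j\le c_j$. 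Then each of $a_j+b_j$, $a_j+c_j$, $b_j+c_j$ has odd level, whereas every \emph{cross sum} --- a rank of triangle $j$ plus a rank of a different triangle $j'$ --- has even level. I would first record that distinct triangles use disjoint sets of rank values (equal ranks in two different triangles would give two vertices with identical adjacency to every third vertex, impossible since they lie in different triangles); in particular $a_1,\dots,a_n$ are pairwise distinct, and likewise for the $b$'s and the $c$'s.

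The crux is a monotonicity lemma, to be proved for each coordinate pair $(x,y)\in\{(a,b),(a,c),(b,c)\}$: if $j\ne j'$, $x_j<x_{j'}$ and $y_j<y_{j'}$, then $\operatorname{level}(x_j+y_j)<\operatorname{level}(x_{j'}+y_{j'})$. The proof is one line: from $x_j+y_j<x_{j'}+y_j<x_{j'}+y_{j'}$ and monotonicity of $f$ we get $\operatorname{level}(x_j+y_j)\le\operatorname{level}(x_{j'}+y_j)\le\operatorname{level}(x_{j'}+y_{j'})$, the outer two levels being odd (internal sums) and the middle one even (a cross sum), so both inequalities are strict. Hence along any family of triangles on which two of the three coordinates increase simultaneously, the levels of the corresponding pairwise sums strictly increase through odd values, so such a family has size at most $\lceil k/2\rceil$.

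Then I would apply the Erd\H{o}s--Szekeres theorem twice. Order the triangles so $a_1<a_2<\dots<a_n$ and consider the sequence $c_1,\dots,c_n$ of distinct reals. Any increasing subsequence of the $c$'s also has the $a$'s increasing, so by the lemma for $(a,c)$ it has length $\le\lceil k/2\rceil$. A decreasing subsequence of the $c$'s is a "nested" family $i_1,\dots,i_M$ with $a_{i_1}<\dots<a_{i_M}$ and $c_{i_1}>\dots>c_{i_M}$; inside it look at $b_{i_1},\dots,b_{i_M}$. An increasing run of these $b$'s has the $a$'s increasing, hence length $\le\lceil k/2\rceil$ by the lemma for $(a,b)$; a decreasing run of these $b$'s has the $c$'s decreasing too, by nestedness, hence length $\le\lceil k/2\rceil$ by the lemma for $(b,c)$. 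Erd\H{o}s--Szekeres gives $M\le\lceil k/2\rceil^{2}$, so the longest decreasing subsequence of $c_1,\dots,c_n$ has length $\le\lceil k/2\rceil^{2}$; a second application of Erd\H{o}s--Szekeres yields $n\le\lceil k/2\rceil\cdot\lceil k/2\rceil^{2}=\lceil k/2\rceil^{3}$, finishing the argument.

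I expect the difficulty to be conceptual rather than computational: the obstacle is spotting the three "diagonal" monotonicity facts and realizing that the nested-family structure lets one bootstrap them from a bound linear in $k$ to one cubic in $k$. The only other points needing care are degenerate triangles, whose three ranks need not be distinct, and the fact --- recorded above --- that ranks from different triangles never coincide, which is precisely what makes the two sequences genuine sequences of distinct reals so that Erd\H{o}s--Szekeres applies.
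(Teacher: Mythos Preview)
Your argument is correct. Note, though, that the paper does not supply its own proof of this theorem; it merely cites Puleo, and later states Puleo's key lemma (Lemma~\ref{ijl}): in any $(\theta_1,\dots,\theta_k)$-representation of $nK_3$, no two triangles can receive the same colour multiset. From that lemma the bound follows in one line by counting multisets of size $3$ from $\lceil k/2\rceil$ colours, giving $n\le\binom{\lceil k/2\rceil+2}{3}\le\lceil k/2\rceil^{3}$.

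Your route is genuinely different: rather than proving the injectivity of the colour-triple map, you extract the weaker ``monotonicity'' consequence and then recover the cubic bound by two applications of Erd\H{o}s--Szekeres. It is worth observing that your monotonicity lemma is in fact exactly the engine behind Puleo's lemma: if triangles $j\ne j'$ had the same ordered level triple, then WLOG $a_j<a_{j'}$, and comparing the signs of $b_j-b_{j'}$ and $c_j-c_{j'}$ one of the three pairs $(a,b)$, $(a,c)$, $(b,c)$ is simultaneously increasing, so your lemma forces a strict level increase, contradicting equality. Thus a shorter finish than Erd\H{o}s--Szekeres is available to you: the map $j\mapsto\bigl(\mathrm{level}(a_j+b_j),\mathrm{level}(a_j+c_j),\mathrm{level}(b_j+c_j)\bigr)$ is injective into weakly increasing triples of odd levels, giving the slightly sharper $n\le\binom{\lceil k/2\rceil+2}{3}$. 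Your Erd\H{o}s--Szekeres argument loses a constant factor but is perfectly valid and still yields $\Theta(nK_3)\ge 2n^{1/3}-1\ge n^{1/3}$.
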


Based on Puleo's result, Kittipassorn and Sumalroj \cite{KS} further determined the exact threshold numbers of $K_{n\times 3}$, $nK_3$, $K_{n\times 4}$, and $nK_4$.

\begin{theorem}[Kittipassorn and Sumalroj \cite{KS}] \label{ks3}
Let $p_m=m+{m\choose 3}+2$ and let $q_m=m+{m\choose 3}+1$.
    
(I) For $n\ge 2$,
    \[ \Theta(K_{n\times 3})=\begin{cases} 
          2m & if\ n=p_{m-1}, \\
          2m+1 & if\ p_{m-1}+1\le n\le p_m-1.
       \end{cases}
    \]
    
(II) For $n\ge 1$,
    \[ \Theta(nK_3)=\begin{cases} 
          2m-1 & if\ n=q_{m-1}, \\
          2m & if\ q_{m-1}+1\le n\le q_m-1.
       \end{cases}
    \]

\end{theorem}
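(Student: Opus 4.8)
\emph{Overall strategy.} The graphs $K_{n\times 3}$ and $nK_3$ are complements of one another, so by the Jamison--Sprague complement theorem their threshold numbers differ by at most $1$ and their parities are linked; I would use this throughout to cross-check and to collapse a few boundary cases, but prove matching upper and lower bounds for both families essentially in parallel, since the two analyses are dual under interchanging the roles of edge and non-edge intervals. In a $k$-threshold representation the thresholds cut $\R$ into $k+1$ intervals that alternate, from the left, non-edge, edge, non-edge, \dots; about $k/2$ of them are bounded, and I will call the bounded intervals of the ``active'' type --- non-edge for $K_{n\times 3}$, edge for $nK_3$ --- the \emph{slots} (so there are roughly $m$ slots when $k\in\{2m-1,2m,2m+1\}$). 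After a generic perturbation one may assume all ranks and all pairwise sums are distinct and miss every threshold; then for every part, with ranks sorted $a<b<c$, the three internal sums $a+b<a+c<b+c$ must all lie in slots or in one of the two unbounded intervals of the active type, while every cross sum between two distinct parts must lie in an interval of the other type.

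\emph{Upper bounds.} The idea is to encode a part by the triple of slots receiving its three internal sums. A ``generic'' part corresponds to a $3$-element subset $\{p<q<r\}$ of the $\approx m$ slots: prescribing these slots essentially forces the differences $b-a$ and $c-b$ (so that $a+b,a+c,b+c$ hit slots $p,q,r$ respectively), leaving only a few units of positional freedom for the part. Choosing these positions --- for the $\binom m3$ generic parts and the $O(m)$ ``degenerate'' parts, the latter reusing a slot or sitting in one of the two unbounded active intervals --- according to a carefully arranged scheme, I would verify that every one of the nine cross sums between two distinct parts is driven into an interval of the other type. This bookkeeping is the routine-but-delicate part, and it is exactly what produces the precise constants $p_m=m+\binom m3+2$ and $q_m=m+\binom m3+1$ as the largest part-counts attainable for the relevant numbers of thresholds.

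\emph{Lower bounds.} This is the heart of the theorem and the main obstacle: Puleo's bound $\Theta(nK_3)\ge n^{1/3}$ must be sharpened all the way to the exact constant. Given a $k$-threshold representation of $nK_3$ (or $K_{n\times 3}$), I would attach to each part the small subset of the $\approx m$ relevant intervals occupied by its three internal sums, and then, by analyzing how the nine cross-sum conditions force the positions of two parts with overlapping ``signatures'' to separate along $\R$, show that the admissible signatures are naturally indexed by the odd subsets of size at most $3$ of an $m$-element set, hence number at most $\binom m1+\binom m3=m+\binom m3$. Proving that one genuinely cannot do better --- in particular that signatures of size two and cleverer reuses of slots are \emph{globally} obstructed, not merely pairwise --- is the delicate step, and it is precisely the gap between an order-of-magnitude estimate and the exact answer. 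The statement for $K_{n\times 3}$ follows from the same argument with edge and non-edge intervals swapped.

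\emph{Assembly.} Finally I would split both families according to the position of $n$ among the numbers $p_m$ (resp.\ $q_m$): each target value, even or odd, needs a construction and a matching lower bound, and at each boundary point ($n=p_{m-1}$ or $n=q_{m-1}$) the complement theorem, together with the parity of a value already pinned down on one side, confirms or even shortcuts the value on the other side. Collecting these pieces yields the case distinctions in (I) and (II).
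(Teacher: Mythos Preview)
Your outline has the right architecture---slot signatures for the lower bound, a slot-assignment construction for the upper bound, and complement duality to pass between (I) and (II)---and you correctly land on the count $m+\binom{m}{3}$. But the two load-bearing technical ideas are absent. For the upper bound, ``a carefully arranged scheme'' that drives all nine cross-sums out of the active slots is not routine bookkeeping; it is the crux. The device Kittipassorn--Sumalroj actually use (reviewed in Section~3.2) is algebraic: take the target edge-sums $a_1<\cdots<a_m$ to be $\Q$-linearly independent positive reals, e.g.\ square roots of distinct primes. The ranks in a cluster $K_3(a_i,a_j,a_\ell)$ are then $\tfrac12(a_i+a_j-a_\ell)$ etc., so every nonedge rank-sum is a half-integer $\Q$-combination of the $a_s$'s and, by independence, cannot equal any $a_s$; one then uses thresholds $a_s,a_s+\epsilon$ for small $\epsilon$. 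Without this trick there is no evident way to separate $\binom{m}{3}+m$ parts simultaneously, and your sketch gives no substitute.

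For the lower bound, your claim that admissible signatures are ``indexed by the odd subsets of size at most $3$'' yields the right number but the wrong structure: two-colour signatures $ijj$ with $i\ne j$ are \emph{not} globally forbidden. What is actually proved, on top of Puleo's lemma that no colour-multiset repeats (Lemma~\ref{ijl}), is Lemma~\ref{ijjill}: for each fixed colour $i$, the patterns $ijj$ and $i\ell\ell$ cannot both occur. Thus at most $m$ ``repeated-colour'' clusters can appear in addition to the $\binom{m}{3}$ three-colour ones, giving $n\le q_m-1$ when there are $\lceil k/2\rceil=m$ colours, i.e.\ $\Theta(nK_3)\ge 2m-1$ for $n\ge q_{m-1}$. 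Sharpening this to $\Theta(nK_3)\ge 2m$ when $n\ge q_{m-1}+1$ is a further, separate argument (Lemma~\ref{lemmaks2}; details in \cite{KS}) that you have not sketched. Finally, the complement theorem only pins $\Theta(K_{n\times 3})$ to within one of $\Theta(nK_3)$, so it cannot by itself ``shortcut'' the boundary values; Kittipassorn--Sumalroj instead rerun the same proof for (I) with edges and nonedges interchanged.
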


\begin{theorem}[Kittipassorn and Sumalroj \cite{KS}] \label{ks4}
Let $s_m=m+{\lfloor m/2\rfloor \choose 3}+{\lceil m/2\rceil \choose 3}+2$ and let $t_m=m+{\lfloor m/2\rfloor \choose 3}+{\lceil m/2\rceil \choose 3}+1$.
    
(I) For $n\ge 2$,
    \[ \Theta(K_{n\times 4})=\begin{cases} 
          2m & if\ n=s_{m-1}, \\
          2m+1 & if\ s_{m-1}+1\le n\le s_m-1.
       \end{cases}
    \]
    
(II) For $n\ge 1$,
    \[ \Theta(nK_4)=\begin{cases} 
          2m-1 & if\ n=t_{m-1}, \\
          2m & if\ t_{m-1}+1\le n\le t_m-1.
       \end{cases}
    \]

\end{theorem}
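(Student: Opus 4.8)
\emph{Strategy.} Because $nK_4$ and $K_{n\times 4}$ are complements, I would first determine $\Theta(nK_4)$ precisely and then read off part (I). Let $g(k)$ be the largest $n$ for which $nK_4$ is a $k$-threshold graph. Since $nK_4$ is an induced subgraph of $(n{+}1)K_4$, Proposition~\ref{prop} makes $\Theta(nK_4)$ nondecreasing in $n$, so $g$ is well defined and $\Theta(nK_4)\le k\iff n\le g(k)$; thus part (II) is equivalent to $g(2m-1)=t_{m-1}$ and $g(2m)=t_m-1$. Part (I) then follows from part (II), the inequality $|\Theta(G)-\Theta(G^c)|\le 1$ and the parity clauses of the complement theorem, the breakpoints matching as $s_{m-1}=t_{m-1}+1$ and $s_m-1=t_m$; a single direct construction disposes of the residual $\pm 1$. (Alternatively (I) can be done by the same method on the complementary picture, where each part of size $4$ must have all six internal sums \emph{outside} the positive set.)

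\emph{The geometry and the key identity.} A $k$-threshold representation is the same datum as a \emph{positive set} $P=[\theta_1,\theta_2)\cup[\theta_3,\theta_4)\cup\cdots$ --- a union of $\lceil k/2\rceil$ intervals, the last an unbounded ray exactly when $k$ is odd --- with ranks $r$ such that $uv\in E\iff r(u)+r(v)\in P$. For a size-$4$ cluster with ranks $a\le b\le c\le d$ and $S:=a+b+c+d$, the three pairings of $\{a,b,c,d\}$ into two pairs give the complementary sum-pairs $\{a{+}b,c{+}d\},\{a{+}c,b{+}d\},\{a{+}d,b{+}c\}$, each of total $S$. Hence all six internal sums lie in $P$ if and only if $a{+}b\le a{+}c\le a{+}d$ all lie in $Q_S:=P\cap(S-P)$, a set symmetric about $S/2$; the other three sums are then their reflections and land for free. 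One checks $a{+}b,a{+}c\le S/2$ always, while $a{+}d$ can lie on either side of $S/2$, the side being governed by the sign of $(a{+}d)-(b{+}c)$. The \emph{type} of a cluster records the component of $P$ at $S/2$, the components of $Q_S$ holding $a{+}b,a{+}c,a{+}d$, and this last sign bit.

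\emph{Upper bound on $g$.} Here a pigeonhole estimate in the spirit of Puleo and of Kittipassorn--Sumalroj's triangle count applies. Say a cluster is \emph{degenerate} if two of $a{+}b,a{+}c,a{+}d$ fall in the same component of $Q_S$ or one of them lies in the ray; there are only a bounded number of degenerate clusters per component, contributing the linear term $m-1$ and the additive $1$ (the latter from the ray). For generic clusters the decisive point is that two of the same type cannot coexist --- their ranks would interleave closely enough that some cross-pair sums into $P$, contradicting that distinct clusters are non-adjacent --- so the number of generic clusters is at most the number of generic types. Counting these is where $\lfloor (m-1)/2\rfloor$ and $\lceil (m-1)/2\rceil$ appear: since $Q_S$ is symmetric, containing three reflection-pairs of sums forces the relevant set of components to be $S$-symmetric, so a generic type amounts to choosing three components on the lower side of $S/2$ --- at most $\lceil (m-1)/2\rceil$ of them, the extra one straddling $S/2$ --- together with the sign bit, and exactly one value of the bit is compatible with using the straddling component; this gives ${\lfloor (m-1)/2\rfloor\choose 3}+{\lceil (m-1)/2\rceil\choose 3}$ admissible types when $k=2m-1$. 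Adding the degenerate count yields $g(2m-1)\le t_{m-1}$, and the even case $g(2m)\le t_m-1$ runs the same way with one more bounded component in play.

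\emph{The matching construction, and the main obstacle.} For the lower bound, given $2m-1$ thresholds I would fix $S$, place the $m-1$ bounded component centers $S$-symmetrically and otherwise generically, and realize one cluster per admissible type: pick targets $x_1<x_2<x_3$ one in each chosen lower-side component and solve $a=\tfrac12(x_1{+}x_2{+}x_3-S),\ b=x_1-a,\ c=x_2-a,\ d=x_3-a$ (with the analogous triangle-type system for the other value of the sign bit), check $a\le b\le c\le d$ and that the three reflected sums fall in the mirror components, then append the constantly many degenerate clusters and one ray cluster. The one thing that must be verified with care is that every cross-pair sum misses $P$ --- i.e.\ that the whole configuration ``separates'' on the line without disturbing any internal sum --- which reduces to an additive-genericity property of the component centers; together with the floor/ceiling bookkeeping that pins the totals to $t_{m-1}$ and $t_m-1$ on the nose, this is where I expect essentially all the difficulty to lie, the rank algebra and the transfer from (II) to (I) being routine by comparison.
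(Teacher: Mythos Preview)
The paper does not contain a proof of Theorem~\ref{ks4}: it is merely stated and attributed to Kittipassorn and Sumalroj~\cite{KS}. The review in Section~3.2 covers only the $K_3$ case (Theorem~\ref{ks3}); nothing about the $K_4$ argument appears here. So there is no ``paper's own proof'' of this statement to compare your proposal against.

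A brief remark on your sketch nonetheless. Your structural observation --- that the six internal sums of a $K_4$-cluster with ranks $a,b,c,d$ fall into three pairs each summing to $S=a+b+c+d$ --- is correct and is indeed what distinguishes the $K_4$ analysis from the $K_3$ one. But your upper-bound counting via $Q_S=P\cap(S-P)$ treats $S$ as if it were global, whereas $S$ varies from cluster to cluster; consequently ``components of $Q_S$'', ``the component of $P$ at $S/2$'', and ``three components on the lower side of $S/2$'' are not cluster-independent labels you can simply enumerate once and for all. In the $K_3$ case reviewed here, the type of a cluster is recorded purely by which components of $P$ the edge sums occupy (a fixed alphabet of $\lceil k/2\rceil$ colors), and the forbidden-configuration lemmas (Lemmas~\ref{ijl} and~\ref{ijjill}) are proved by comparing rank sums between two clusters directly. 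Your argument would have to be recast in those terms: the floor/ceiling split in $t_m$ should emerge from combinatorial constraints on the six-color multiset forced by the complementary-pair structure, together with analogues of Lemmas~\ref{ijl} and~\ref{ijjill} for $K_4$, rather than from the symmetry of a single $Q_S$. As written, the step that bounds the number of generic types by $\binom{\lfloor(m-1)/2\rfloor}{3}+\binom{\lceil(m-1)/2\rceil}{3}$ is not justified, and the assertion that $S/2\in P$ (implicit in ``the component of $P$ at $S/2$'') need not hold. The overall plan --- type-count upper bound, explicit $\{a_1,\dots,a_m\}$-style construction for the lower bound, then transfer to $K_{n\times 4}$ via the complement theorem --- is the right shape; the gap is in the execution of the type count.
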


Sumalroj \cite{Su} proposed the following problem for the next goal.
\begin{problem}[Sumalroj \cite{Su}] \label{probsu}
    Determine the exact threshold numbers of $K_{n_1\times 1, n_2\times 2, n_3\times 3}$ and their complements $n_1 K_1\cup n_2 K_2\cup n_3 K_3$.
\end{problem}

In this paper, we determine $\Theta(K_{n_1\times 1, n_2\times 2, n_3\times 3})$ and $\Theta(n_1 K_1\cup n_2 K_2\cup n_3 K_3)$ to give the solution of Problem \ref{probsu}.

\begin{theorem} \label{main}
    Let $n_1,n_2,n_3$ be nonnegative integers, let $p_m=m+{m\choose 3}+2$, and let $q_m=m+{m\choose 3}+1$.

    (I) For $n_1+n_2+n_3\ge 2$:

    \qquad (i) If $n_3\le 2$, then
    \[ \Theta(K_{n_1\times 1, n_2\times 2, n_3\times 3})=\begin{cases} 
          1 & if\ n_2+n_3\le 1, \\
          2 & if\ n_2+n_3=2, \\
          3 & if\ n_2+n_3\ge 3.
       \end{cases}
    \]

    \qquad (ii) If $n_3\ge 3$, then
    \[ \Theta(K_{n_1\times 1, n_2\times 2, n_3\times 3})=\begin{cases} 
          2m & if\ n_3=p_{m-1}, \\
          2m+1 & if\ p_{m-1}+1\le n_3\le p_m-1.
       \end{cases}
    \]

    (II) For $n_1+n_2+n_3\ge 1$:

    \qquad (i) If $n_3\le 1$, then
    \[ \Theta(n_1 K_1\cup n_2 K_2\cup n_3 K_3)=\begin{cases} 
          0 & if\ n_2+n_3=0, \\
          1 & if\ n_2+n_3=1, \\
          2 & if\ n_2+n_3\ge 2.
       \end{cases}
    \]

    \qquad (ii) If $n_3\ge 2$, then
    \[ \Theta(n_1 K_1\cup n_2 K_2\cup n_3 K_3)=\begin{cases} 
          2m-1 & if\ n_3=q_{m-1}, \\
          2m & if\ q_{m-1}+1\le n_3\le q_m-1.
       \end{cases}
    \]
\end{theorem}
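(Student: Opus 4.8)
The plan is to bound $\Theta$ in each of the four displayed formulas from below by an induced subgraph, via Proposition~\ref{prop} and, in the large-$n_3$ regime, Theorem~\ref{ks3}, and from above by an explicit rank assignment. Since $K_{n_1\times1,n_2\times2,n_3\times3}$ and $n_1K_1\cup n_2K_2\cup n_3K_3$ are complements, I would also pass information between parts~(I) and~(II) through Jamison and Sprague's inequality $|\Theta(G)-\Theta(G^c)|\le1$ and its parity refinement; once part~(II)(ii) is settled, this plus the lower bound identifies $\Theta(K_{n_1\times1,n_2\times2,n_3\times3})$ exactly on the whole range $p_{m-1}+1\le n_3\le p_m-1$, leaving essentially only the transition values $n_3=p_{m-1}$ in~(I) and $n_3=q_{m-1}$ in~(II) to be handled by direct construction. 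I treat the ``large'' regime ($n_3\ge3$ in~(I), $n_3\ge2$ in~(II)) and the ``small'' regime separately.

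For the lower bounds in the large regime, $K_{n_3\times3}$ (resp.\ $n_3K_3$) is an induced subgraph of $K_{n_1\times1,n_2\times2,n_3\times3}$ (resp.\ of $n_1K_1\cup n_2K_2\cup n_3K_3$), so Proposition~\ref{prop} gives $\Theta\ge\Theta(K_{n_3\times3})$ (resp.\ $\Theta\ge\Theta(n_3K_3)$), which Theorem~\ref{ks3} evaluates to exactly the stated quantity. In the small regime I would pin down the obstructions: as soon as $n_2+n_3\ge2$ both graphs contain an induced $P_4$, $C_4$, or $2K_2$, hence are not threshold graphs and $\Theta\ge2$; and as soon as $n_2+n_3\ge3$ with $n_3\le2$ the complete multipartite graph contains an induced $K_{2,2,2}$. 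One then needs $\Theta(K_{2,2,2})=3$: since its complement $3K_2$ has threshold number $2$, the complement inequality gives $\Theta(K_{2,2,2})\in\{2,3\}$, and a short argument rules out $2$ --- ordering the six ranks, the smallest-rank vertex forces the rank-sum of its non-adjacent partner below $\theta_1$, after which propagating ``$\ge\theta_1$'' along the cross-edges from the bottom and ``$<\theta_2$'' along those near the top forces the third non-adjacent pair into the edge band $[\theta_1,\theta_2)$, a contradiction. The remaining small-regime values ($0$, $1$) are immediate ($n_1K_1$ is edgeless; $K_2\cup n_1K_1$, $K_3\cup n_1K_1$, and $K_n$ are threshold graphs), and $\Theta(nK_2)=2$ for $n\ge2$ follows from a one-line construction (edge $i$ at ranks $-c^i$ and $\theta_1+c^i$, $c$ large).

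For the upper bounds the core task is to take a Kittipassorn--Sumalroj extremal $(\theta_1<\dots<\theta_k)$-representation $r$ of $K_{n_3\times3}$ (resp.\ $n_3K_3$) and extend it over the size-$1$ and size-$2$ parts (resp.\ the $K_1$ and $K_2$ clusters) without adding a threshold. A $K_1$ cluster is free: give it a rank so far below $\min r$ that every sum stays below $\theta_1$. If $k$ is odd a universal vertex is free (rank so large that every sum exceeds $\theta_k$, hence an edge to everything). If $k$ is even a $K_2$ cluster or size-$2$ part is free: put one vertex very high and its partner very low with rank-sum tuned to $\theta_1$, so each is non-adjacent to the bulk but adjacent to the other, and let successive such pairs grow geometrically so none interfere. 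The delicate case is the reverse parity, which is exactly the transition values $n_3=p_{m-1}$ in~(I) ($k=2m$ even, so a ``very high'' rank now would be adjacent to the whole bulk) and $n_3=q_{m-1}$ in~(II) ($k=2m-1$ odd, same issue for $K_2$'s): there the extra small parts must instead be slotted, together with their tuned partners, inside a \emph{bounded} non-edge band $[\theta_{2j},\theta_{2j+1})$ of the representation. This forces one to exploit internal structure of the Kittipassorn--Sumalroj construction --- namely that at these transition counts the extremal family can be built so that the rank spread of the bulk is smaller than one such non-edge gap, leaving just enough room --- or to adjust that construction slightly to guarantee this ``spare band''. Making this precise is the step I expect to be the main obstacle. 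The small regime is then finished by hand with at most three thresholds; for instance $K_{n_1\times1,n_2\times2}$ is realized by putting every vertex of every size-$2$ part at one of two nearby values $\alpha<\alpha'$ with $2\alpha,2\alpha'<\theta_1$ and $\alpha+\alpha'\in[\theta_1,\theta_2)$, and every universal vertex at a value $\mu$ with $2\mu,\mu+\alpha,\mu+\alpha'\in[\theta_1,\theta_2)$, with the complement inequality clearing up any leftover case.
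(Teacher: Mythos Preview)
Your overall architecture---lower bounds via Proposition~\ref{prop} and Theorem~\ref{ks3}, small regime by direct construction---matches the paper. The divergence is in the upper-bound construction for part~(II)(ii), and the obstacle you flag at the transition value $n_3=q_{m-1}$ is real and is not resolved in your proposal: your high/low trick for a $K_2$-cluster genuinely fails when the number of thresholds is odd, and the ``spare band'' repair you sketch would require showing that the bulk rank spread fits inside one non-edge gap \emph{and} that successive $K_2$'s can be packed there without their mutual cross-sums landing in an edge band. This can probably be pushed through with a sufficiently aggressive choice of the $a_i$ (say $a_m$ well beyond $3a_{m-1}$ rather than merely $2a_{m-1}$), but you do not carry it out, and it is the crux of the theorem.

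The paper sidesteps the difficulty with a single construction that handles both parities at once. The preparatory step (Lemma~\ref{lemmafinding}) is to choose the $\{a_1,\dots,a_m\}$-assignment of $n_3K_3$ so that \emph{every} monochromatic cluster $K_3(a_i,a_i,a_i)$ is present: at $n_3=q_{m-1}$ this is forced (the KS family consists of all $K_3(a_i,a_i,a_i)$ together with all $K_3(a_i,a_j,a_\ell)$ with $i,j,\ell\le m-1$), and for $q_{m-1}+1\le n_3\le q_m-1$ one reaches $n_3$ from $q_m-1$ by deleting only clusters with three distinct edge sums. Then every new vertex is placed as a small perturbation of $a_1/2$, the common rank in $K_3(a_1,a_1,a_1)$: set $r(u_i)=\tfrac{a_1}{2}+i\epsilon$, $r(v_j)=\tfrac{a_1}{2}+(n_1+j)\epsilon$, $r(w_j)=\tfrac{a_1}{2}-(n_1+j)\epsilon$, with $\epsilon=\Delta/(2(n_1+n_2))$ and $\Delta$ the minimum positive gap in the KS representation. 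The point is that for any bulk vertex $x\notin K_3(a_1,a_1,a_1)$ the quantity $r(x)+\tfrac{a_1}{2}$ is already a \emph{non-edge} rank sum in $n_3K_3$, hence at distance $\ge\Delta$ from every threshold, so a perturbation of size $<\Delta/2$ cannot cross one. The exceptional case $x\in K_3(a_1,a_1,a_1)$ and the cross-sums among the new vertices are all of the form $a_1+k\epsilon$ with nonzero integer $k$, $|k|<2(n_1+n_2)$, and land either below $a_1$ or in $[a_1+\epsilon,a_2)$. No bandwidth comparison is needed, and nothing changes when the top threshold $a_m+\epsilon$ is dropped at $n_3=q_{m-1}$. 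This is the idea your proposal is missing.
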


Note that Theorem \ref{ks3}, Theorem \ref{ks4}, and Theorem \ref{main} give more examples confirming Conjecture \ref{conj}.

For Theorem \ref{ks3} and Theorem \ref{ks4}, Kittipassorn and Sumalroj \cite{KS} gave detailed proofs for $nK_3$ and $nK_4$, and omitted the proofs for $K_{n\times 3}$ and $K_{n\times 4}$, because the ideas are exactly the same --- for a graph $G$ and its complement $G^c$, whatever we do to an edge in $G$, we can do the same thing to a nonedge in $G^c$.
Theorem \ref{main} can be seen as a generalization of Theorem \ref{ks3}, so when we prove it, we will also concentrate on part (II), which is about $n_1 K_1\cup n_2 K_2\cup n_3 K_3$, and omit the details about $K_{n_1\times 1, n_2\times 2, n_3\times 3}$.

We organize the rest of our paper as follows:
\begin{itemize}
    \item In Section 2, we determine the threshold numbers of linear forests, ladders, and tents.
    \item In Section 3, we first prove (II).(i) of Theorem \ref{main}; then give a review on Kittipassorn and Sumalroj's methodology for proving Theorem \ref{ks3}; on the basis of Kittipassorn and Sumalroj's results, we prove (II).(ii) of Theorem \ref{main}. The key part of our proof is properly assigning ranks to the vertices in $n_1 K_1\cup n_2 K_2$.
    \item In Section 4, we give some remarks and suggest some future work.
\end{itemize}

\section{Variants of paths}
The results in the following lemma have been mentioned in \cite{JS22}, we include a proof here for completeness.

\begin{lemma} \label{lemma}
    We have
    \begin{align*}
        \Theta(2P_2)=\Theta(P_4)=\Theta(C_4)=2.
    \end{align*}
\end{lemma}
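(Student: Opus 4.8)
The plan is to prove, for each of the three graphs $2P_2$, $P_4$, $C_4$, the lower bound $\Theta\ge 2$ and the upper bound $\Theta\le 2$ separately. For the lower bound I would use one uniform observation: each of $2P_2$, $P_4$, and $C_4$ contains four vertices $a,b,c,d$ with $ab,cd\in E$ but $ac,bd\notin E$. Indeed, for $P_4$ with consecutive vertices $a,b,c,d$, and for $C_4$ with cyclically consecutive vertices $a,b,c,d$, take the matching $\{ab,cd\}$, whose remaining pairs $ac$ and $bd$ are non-edges; and for $2P_2$ take its two edges. If such a graph had a $1$-threshold representation $r$ with threshold $\theta$, then $uv\in E$ would be equivalent to $r(u)+r(v)\ge\theta$, so the two edges give $r(a)+r(b)+r(c)+r(d)\ge 2\theta$ while the two non-edges give $r(a)+r(b)+r(c)+r(d)<2\theta$, a contradiction. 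Hence none of the three is a $1$-threshold graph; as each has an edge it is not $0$-threshold either, so $\Theta\ge 2$ in all three cases. (This simply reproves the classical Chv\'atal--Hammer fact that $2K_2=2P_2$, $P_4$, $C_4$ are the minimal non-threshold graphs, with the sum inequality written out.)

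For the upper bound I would first record that a graph admits a $(\theta_1,\theta_2)$-representation $r$ with $\theta_1<\theta_2$ exactly when, for all $u\ne v$, $uv\in E\iff\theta_1\le r(u)+r(v)<\theta_2$, since $r(u)+r(v)\ge\theta_i$ holds for an odd number of $i\in\{1,2\}$ precisely when it holds for exactly one of them, i.e. precisely when $\theta_1\le r(u)+r(v)<\theta_2$. Thus it suffices, for each graph, to choose ranks so that every adjacent pair has rank-sum inside some fixed half-open interval $[\theta_1,\theta_2)$ and every non-adjacent pair has rank-sum outside it. For $C_4$ with bipartition $\{a,c\},\{b,d\}$: put $r(a)=r(c)=1$, $r(b)=r(d)=-1$, and $(\theta_1,\theta_2)=(-1,1)$; the four edges all have rank-sum $0\in[-1,1)$, while $r(a)+r(c)=2$ and $r(b)+r(d)=-2$ are outside. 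For $P_4$ with consecutive vertices $v_1,v_2,v_3,v_4$: put $(r(v_1),r(v_2),r(v_3),r(v_4))=(-2,2,-1,1)$ and $(\theta_1,\theta_2)=(0,2)$; the consecutive sums $0,1,0$ lie in $[0,2)$ and the non-consecutive sums $-3,-1,3$ do not. For $2P_2$ with edges $v_1v_2$ and $v_3v_4$: put $(r(v_1),r(v_2),r(v_3),r(v_4))=(\tfrac12,\tfrac12,3,-2)$ and $(\theta_1,\theta_2)=(0,2)$; then $r(v_1)+r(v_2)=r(v_3)+r(v_4)=1\in[0,2)$ while the four remaining sums equal $\tfrac72$ or $-\tfrac32$, both outside. (Alternatively, $\Theta(P_4)\le 2$ is immediate from Jamison and Sprague's bound $\Theta(P_n)\le 2$, and $2P_2$ is an induced subgraph of $P_5$, so $\Theta(2P_2)\le\Theta(P_5)\le 2$ by Proposition \ref{prop}; but the direct construction also handles $C_4$.)

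I do not anticipate a genuine obstacle here: the content is the one-line sum inequality for the lower bound together with three explicit rank assignments for the upper bound. The only point demanding attention is the short finite check that, in each construction, every non-adjacent pair of vertices has rank-sum strictly outside the chosen interval $[\theta_1,\theta_2)$ --- in particular, keeping track of the asymmetry at the two endpoints. Combining the two bounds yields $\Theta(2P_2)=\Theta(P_4)=\Theta(C_4)=2$.
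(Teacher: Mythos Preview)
Your proposal is correct and follows essentially the same approach as the paper: the lower bound is the identical sum-inequality contradiction (the paper carries it out for $2P_2$ and says ``similarly'' for $P_4$ and $C_4$), and the upper bound is by explicit rank assignments with two thresholds, differing from the paper's only in the specific numerical choices.
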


\begin{proof}
    For $2P_2$, we assume it consists of two disjoint paths $v_1 v_2$ and $v_3 v_4$. If $\Theta(2P_2)=1$, then there is a rank assignment $r:V\longrightarrow \R$ and a threshold $\theta$ such that
    \[\begin{cases}
        f(v_1)+f(v_2)\ge \theta, \\
        f(v_3)+f(v_4)\ge \theta, \\
        f(v_1)+f(v_3)< \theta, \\
        f(v_2)+f(v_4)< \theta.
    \end{cases}
    \]
    However, these inequalities imply 
    \begin{align*}
        f(v_1)+f(v_3)+f(v_2)+f(v_4)<2\theta\le f(v_1)+f(v_2)+f(v_3)+f(v_4),
    \end{align*}
    contradiction. So $\Theta(2P_2)\ge 2$. Similarly, we have $\Theta(P_4)\ge 2$ and $\Theta(C_4)\ge 2$.

    If we take $\theta_1=-\frac{1}{2}$ and $\theta_2=\frac{1}{2}$, then the rank assignments in Figure \ref{rankassignments} show that $\Theta(2P_2)=\Theta(P_4)=\Theta(C_4)=2$.

    \begin{figure}[H]  
        \tikzset{every picture/.style={line width=0.75pt}} 

\begin{tikzpicture}[x=0.75pt,y=0.75pt,yscale=-1,xscale=1]

\draw   (312,54) -- (386,54) -- (386,128) -- (312,128) -- cycle ;
\draw    (192,54) -- (266,54) ;
\draw    (192,54) -- (192,128) ;
\draw    (192,128) -- (266,128) ;
\draw    (72,54) -- (146,54) ;
\draw    (72,128) -- (146,128) ;
\draw  [fill={rgb, 255:red, 0; green, 0; blue, 0 }  ,fill opacity=1 ] (66.5,54) .. controls (66.5,50.96) and (68.96,48.5) .. (72,48.5) .. controls (75.04,48.5) and (77.5,50.96) .. (77.5,54) .. controls (77.5,57.04) and (75.04,59.5) .. (72,59.5) .. controls (68.96,59.5) and (66.5,57.04) .. (66.5,54) -- cycle ;
\draw  [fill={rgb, 255:red, 0; green, 0; blue, 0 }  ,fill opacity=1 ] (140.5,54) .. controls (140.5,50.96) and (142.96,48.5) .. (146,48.5) .. controls (149.04,48.5) and (151.5,50.96) .. (151.5,54) .. controls (151.5,57.04) and (149.04,59.5) .. (146,59.5) .. controls (142.96,59.5) and (140.5,57.04) .. (140.5,54) -- cycle ;
\draw  [fill={rgb, 255:red, 0; green, 0; blue, 0 }  ,fill opacity=1 ] (66.5,128) .. controls (66.5,124.96) and (68.96,122.5) .. (72,122.5) .. controls (75.04,122.5) and (77.5,124.96) .. (77.5,128) .. controls (77.5,131.04) and (75.04,133.5) .. (72,133.5) .. controls (68.96,133.5) and (66.5,131.04) .. (66.5,128) -- cycle ;
\draw  [fill={rgb, 255:red, 0; green, 0; blue, 0 }  ,fill opacity=1 ] (140.5,128) .. controls (140.5,124.96) and (142.96,122.5) .. (146,122.5) .. controls (149.04,122.5) and (151.5,124.96) .. (151.5,128) .. controls (151.5,131.04) and (149.04,133.5) .. (146,133.5) .. controls (142.96,133.5) and (140.5,131.04) .. (140.5,128) -- cycle ;
\draw  [fill={rgb, 255:red, 0; green, 0; blue, 0 }  ,fill opacity=1 ] (186.5,54) .. controls (186.5,50.96) and (188.96,48.5) .. (192,48.5) .. controls (195.04,48.5) and (197.5,50.96) .. (197.5,54) .. controls (197.5,57.04) and (195.04,59.5) .. (192,59.5) .. controls (188.96,59.5) and (186.5,57.04) .. (186.5,54) -- cycle ;
\draw  [fill={rgb, 255:red, 0; green, 0; blue, 0 }  ,fill opacity=1 ] (260.5,54) .. controls (260.5,50.96) and (262.96,48.5) .. (266,48.5) .. controls (269.04,48.5) and (271.5,50.96) .. (271.5,54) .. controls (271.5,57.04) and (269.04,59.5) .. (266,59.5) .. controls (262.96,59.5) and (260.5,57.04) .. (260.5,54) -- cycle ;
\draw  [fill={rgb, 255:red, 0; green, 0; blue, 0 }  ,fill opacity=1 ] (186.5,128) .. controls (186.5,124.96) and (188.96,122.5) .. (192,122.5) .. controls (195.04,122.5) and (197.5,124.96) .. (197.5,128) .. controls (197.5,131.04) and (195.04,133.5) .. (192,133.5) .. controls (188.96,133.5) and (186.5,131.04) .. (186.5,128) -- cycle ;
\draw  [fill={rgb, 255:red, 0; green, 0; blue, 0 }  ,fill opacity=1 ] (260.5,128) .. controls (260.5,124.96) and (262.96,122.5) .. (266,122.5) .. controls (269.04,122.5) and (271.5,124.96) .. (271.5,128) .. controls (271.5,131.04) and (269.04,133.5) .. (266,133.5) .. controls (262.96,133.5) and (260.5,131.04) .. (260.5,128) -- cycle ;
\draw  [fill={rgb, 255:red, 0; green, 0; blue, 0 }  ,fill opacity=1 ] (306.5,54) .. controls (306.5,50.96) and (308.96,48.5) .. (312,48.5) .. controls (315.04,48.5) and (317.5,50.96) .. (317.5,54) .. controls (317.5,57.04) and (315.04,59.5) .. (312,59.5) .. controls (308.96,59.5) and (306.5,57.04) .. (306.5,54) -- cycle ;
\draw  [fill={rgb, 255:red, 0; green, 0; blue, 0 }  ,fill opacity=1 ] (380.5,54) .. controls (380.5,50.96) and (382.96,48.5) .. (386,48.5) .. controls (389.04,48.5) and (391.5,50.96) .. (391.5,54) .. controls (391.5,57.04) and (389.04,59.5) .. (386,59.5) .. controls (382.96,59.5) and (380.5,57.04) .. (380.5,54) -- cycle ;
\draw  [fill={rgb, 255:red, 0; green, 0; blue, 0 }  ,fill opacity=1 ] (306.5,128) .. controls (306.5,124.96) and (308.96,122.5) .. (312,122.5) .. controls (315.04,122.5) and (317.5,124.96) .. (317.5,128) .. controls (317.5,131.04) and (315.04,133.5) .. (312,133.5) .. controls (308.96,133.5) and (306.5,131.04) .. (306.5,128) -- cycle ;
\draw  [fill={rgb, 255:red, 0; green, 0; blue, 0 }  ,fill opacity=1 ] (380.5,128) .. controls (380.5,124.96) and (382.96,122.5) .. (386,122.5) .. controls (389.04,122.5) and (391.5,124.96) .. (391.5,128) .. controls (391.5,131.04) and (389.04,133.5) .. (386,133.5) .. controls (382.96,133.5) and (380.5,131.04) .. (380.5,128) -- cycle ;

\draw (60,33) node [anchor=north west][inner sep=0.75pt]   [align=left] {$\displaystyle -1$};
\draw (139,33) node [anchor=north west][inner sep=0.75pt]   [align=left] {$\displaystyle 1$};
\draw (65,136) node [anchor=north west][inner sep=0.75pt]   [align=left] {$\displaystyle 0$};
\draw (139,136) node [anchor=north west][inner sep=0.75pt]   [align=left] {$\displaystyle 0$};
\draw (253,13) node [anchor=north west][inner sep=0.75pt]   [align=left] {$\displaystyle -\frac{1}{4}$};
\draw (184,13) node [anchor=north west][inner sep=0.75pt]   [align=left] {$\displaystyle \frac{1}{2}$};
\draw (178,136) node [anchor=north west][inner sep=0.75pt]   [align=left] {$\displaystyle -\frac{3}{4}$};
\draw (260,136) node [anchor=north west][inner sep=0.75pt]   [align=left] {$\displaystyle 1$};
\draw (306,33) node [anchor=north west][inner sep=0.75pt]   [align=left] {$\displaystyle 1$};
\draw (380,136) node [anchor=north west][inner sep=0.75pt]   [align=left] {$\displaystyle 1$};
\draw (299,136) node [anchor=north west][inner sep=0.75pt]   [align=left] {$\displaystyle -1$};
\draw (373,33) node [anchor=north west][inner sep=0.75pt]   [align=left] {$\displaystyle -1$};

\end{tikzpicture}
\caption{Rank assignments for $2P_2$, $P_4$, and $C_4$.}
\label{rankassignments}
    \end{figure}
\end{proof}

Jamison and Sprague \cite{JS20} showed that the threshold number of a path $P_n$ is at most two. (In fact, $\Theta(P_n)=2$ if and only if $n\ge 4$.) Moreover, they showed that the threshold number of a caterpillar is also at most two.

Here we study some other variants of paths.

\subsection{Linear forests}
Taking the disjoint union of multiple paths, we get a \emph{linear forest}. A linear forest consisting of $a_i$ paths of length $i$ for $i\in[n]$ can be denoted by
\begin{align*}
    \bigcup_{i=1}^n a_n P_n.
\end{align*}

\begin{theorem}
    If $\sum_{i=2}^n a_n\ge 2$, then 
    \begin{align*}
        \Theta(\bigcup_{i=1}^n a_n P_n)=2.
    \end{align*}
\end{theorem}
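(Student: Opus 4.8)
The plan is to prove the lower bound $\Theta\bigl(\bigcup_{i=1}^n a_i P_i\bigr)\ge 2$ and the matching upper bound separately. The lower bound is immediate from monotonicity: the hypothesis $\sum_{i=2}^n a_i\ge 2$ guarantees that our linear forest contains at least two components, each of which is a path on at least two vertices, so it contains $2P_2$ as an induced subgraph. By Proposition~\ref{prop} and Lemma~\ref{lemma} we get $\Theta\bigl(\bigcup_{i=1}^n a_i P_i\bigr)\ge \Theta(2P_2)=2$. (Here I am using that any $P_i$ with $i\ge 2$ has $P_2$ as an induced subgraph, and the two copies come from two different components, hence are non-adjacent, giving an induced $2P_2$.)

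For the upper bound I would exhibit an explicit $(\theta_1,\theta_2)$-representation with $\theta_1=-\tfrac12$, $\theta_2=\tfrac12$, mimicking the construction in Lemma~\ref{lemma}. Recall that with these two thresholds, a pair $u,v$ is an edge iff $r(u)+r(v)\in[-\tfrac12,\tfrac12)$ (exactly one threshold is met, namely $\theta_1$), while non-edges are those with sum $<-\tfrac12$ or $\ge\tfrac12$. I would first handle a single path $P_m$ with vertices $w_1,\dots,w_m$ by assigning ranks in a zig-zag fashion so that consecutive ranks sum into $[-\tfrac12,\tfrac12)$ and non-consecutive ones fall outside; the Jamison--Sprague fact that $\Theta(P_m)\le 2$ (quoted in the excerpt) already supplies such a representation, and one can take it to use ranks lying in a bounded window, say within a small interval around some center value $c$. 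Then, to assemble the whole forest, I would place the components in well-separated ``bands'': the $j$-th component gets its path-representation shifted by a large translate $t_j$, chosen so that the sums of ranks across two different components are pushed far above $\theta_2=\tfrac12$ (hence non-edges) — for instance choosing the $t_j$ increasing rapidly and all positive enough that any cross-component sum exceeds $\tfrac12$, while within each component the relative differences are preserved and the internal adjacency structure is unchanged because it depends only on sums within that band.

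The one point requiring a little care — and the main obstacle — is that translating a component by $t_j$ shifts every internal pairwise sum by $2t_j$, which would destroy the internal representation unless we also recenter. The clean fix is to choose, for the single-path building block, a representation whose ranks are \emph{symmetric about $0$} in aggregate but in which adjacent sums land near $0$ and non-adjacent sums are bounded; concretely one can take the standard $P_m$ representation and then, when placing it as component $j$, scale it down to lie in a tiny interval and then translate so that adjacent sums stay inside $[-\tfrac12,\tfrac12)$ while the whole cluster of ranks sits near a chosen center $c_j$ with the $c_j$ large and spread out. Since scaling by a positive constant and translating by adding a constant to all ranks only rescales and shifts all sums, and since the internal threshold conditions only constrain sums \emph{within} a component, one verifies: (a) internal edges/non-edges of each $P_i$ are correctly represented after the affine adjustment, and (b) for $u$ in component $j$ and $v$ in component $j'\ne j$, the sum $r(u)+r(v)$ is at least (roughly) $c_j+c_{j'}$ minus a small error, which we arrange to exceed $\tfrac12$, so $uv$ is correctly a non-edge. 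This gives a valid $(\,{-\tfrac12},\tfrac12\,)$-representation, so $\Theta\bigl(\bigcup_{i=1}^n a_i P_i\bigr)\le 2$, completing the proof.
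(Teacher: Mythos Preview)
Your lower bound is fine and matches the paper's argument exactly.

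The upper bound, however, has a genuine gap. Your proposed fix of ``scale down to a tiny interval and translate to a center $c_j$'' cannot work with fixed global thresholds. If every rank in component $j$ lies in $[c_j-\delta,c_j+\delta]$, then \emph{every} internal pairwise sum in that component lies in $[2c_j-2\delta,\,2c_j+2\delta]$; for the adjacent sums to land in $[-\tfrac12,\tfrac12)$ you are forced to take $c_j$ close to $0$. But then all the $c_j$ are close to $0$, cross-component sums $c_j+c_{j'}$ are also close to $0$, and those non-edges fall inside $[-\tfrac12,\tfrac12)$ as well. More bluntly: an affine transformation $r\mapsto \lambda r+t$ sends all internal sums to $\lambda(\text{sum})+2t$, so it moves adjacent and non-adjacent sums \emph{together}; you cannot keep the former inside the edge window while pushing the latter (or the cross-component sums) outside. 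The sentence ``the internal threshold conditions only constrain sums within a component'' is where the argument slips: the thresholds are global, and the translation $2t_j$ does destroy the internal representation.

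The paper's construction avoids this entirely by \emph{not} putting components into separate translated bands. Instead it uses thresholds $\theta_1=-\tfrac32$, $\theta_2=\tfrac32$ and assigns, along a single enumeration of all vertices, the alternating-sign ranks $1,-2,3,-4,\dots$ within the first path, then jumps by $2$ in absolute value and continues $k_1+2,\,-(k_1+3),\dots$ for the second path, and so on. Adjacent ranks always differ by $1$ in absolute value with opposite signs, so every edge sum is $\pm 1$; every non-adjacent pair (within a path or across paths) has sum of absolute value at least $2$, hence outside $(-\tfrac32,\tfrac32)$. The point is that each component simultaneously contains large positive \emph{and} large negative ranks, so adjacent sums stay near $0$ while the magnitudes themselves grow---which is exactly what your translated-band picture cannot achieve.
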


\begin{proof}
    Assuming $\sum_{i=2}^n a_n\ge 2$, we know $\Theta(\bigcup_{i=1}^n a_n P_n)\ge 2$ by Proposition \ref{prop}, the fact that $2P_2$ is an induced subgraph of $\bigcup_{i=1}^n a_n P_n$, and our conclusion that $\Theta(2P_2)=2$.

    Then we show that $\Theta(\bigcup_{i=1}^n a_n P_n)\le 2$. We take the two thresholds to be $\theta_1=-\frac{3}{2}$ and $\theta_2=\frac{3}{2}$. For the rank assignment, first we randomly pick a path $P_{k_1}$ from the $\sum_{i=1}^n a_n$ paths, and assign ranks $1,\ -2,\ 3,\ -4,\ ...,\ (-1)^{k_1-1}k_1$ to its vertices from one end to the other. Then we pick the second path $P_{k_2}$, and assign ranks $k_1+2,\ -(k_1+3),\ k_1+4,\ -(k_1+5),\ ...,\ (-1)^{k_2-1}(k_1+k_2+1)$ to its vertices from one end to the other. In general, for the $i$-th picked path, the rank of the starting endpoint will be 
    \begin{align*}
        |the\ rank\ of\ the\ ending\ endpoint\ of\ the\ (i-1)-th\ path|+2,
    \end{align*}
    and as we move to the ending endpoint, every time we add one and take $+/-$ alternatively. 

    Then it is easy to check that every edge rank sum is $-1$ or $1$, and every nonedge rank sum is smaller than $-\frac{3}{2}$ or greater than $\frac{3}{2}$.
\end{proof}

\subsection{Ladders} A \emph{ladder} of length $n$ is just $P_n\square P_2$, which can be constructed by connecting each pair of vertices at the same position in two $P_n$'s.

\begin{theorem}
    If $n\ge 2$, then 
    \begin{align*}
        \Theta(P_n\square P_2)=2.
    \end{align*}
\end{theorem}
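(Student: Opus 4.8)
The plan is to prove the lower bound $\Theta(P_n\square P_2)\ge 2$ for $n\ge 2$ by exhibiting an induced subgraph already known to have threshold number $2$, and then to prove the upper bound $\Theta(P_n\square P_2)\le 2$ by constructing an explicit two-threshold representation. For the lower bound, observe that $P_2\square P_2=C_4$, so when $n\ge 2$ the ladder $P_n\square P_2$ contains $C_4$ as an induced subgraph (the four vertices at two adjacent rungs); by Proposition \ref{prop} and Lemma \ref{lemma} this gives $\Theta(P_n\square P_2)\ge \Theta(C_4)=2$.

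For the upper bound I would imitate the path/caterpillar strategy. Label the two rails $u_1,\dots,u_n$ and $w_1,\dots,w_n$, with rungs $u_iw_i$ and rails $u_iu_{i+1}$, $w_iw_{i+1}$. Take thresholds $\theta_1=-c$, $\theta_2=c$ for a suitable constant $c$, so that $uv$ is an edge exactly when $|r(u)+r(v)|<c$. The idea is to give each rung a ``center'' value that increases as $i$ grows, and to split that value as $r(u_i)=m_i+\varepsilon_i$, $r(w_i)=m_i-\varepsilon_i$, so that the rung sum is $2m_i$ (forced into the window $(-c,c)$ by choosing the $m_i$ small, e.g. symmetric around $0$), while the rail sums $r(u_i)+r(u_{i+1})=m_i+m_{i+1}+\varepsilon_i+\varepsilon_{i+1}$ and $r(w_i)+r(w_{i+1})=m_i+m_{i+1}-\varepsilon_i-\varepsilon_{i+1}$ must also land in the window, and all remaining pairs (the diagonals $u_iw_{i+1}$, $u_iw_j$ with $|i-j|\ge 2$, $u_iu_j$ with $|i-j|\ge 2$, etc.) must land outside it. A clean way to arrange this: set $m_i$ to be a slowly-varying sequence with $m_i+m_{i+1}$ alternating just inside $\pm c$ (as in the zig-zag $1,-2,3,-4,\dots$ rescaling used for paths, but compressed so consecutive sums stay in-window while sums of indices two apart jump out), and take the $\varepsilon_i$ to be a much smaller alternating perturbation so that rung sums $2m_i$ and both rail sums stay in-window, the diagonal sums $m_i+m_{i+1}\pm(\varepsilon_i-\varepsilon_{i+1})$ or $\pm(\varepsilon_{i+1}-\varepsilon_i)$ are pushed out of window (this is where the perturbation must be chosen with care so that the ``+'' diagonal and ``$-$'' diagonal behave oppositely, exactly one configuration giving a non-edge), and all far-apart pairs are out of window automatically because the $m$-values are spread out. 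Concretely I expect something like $r(u_i),r(w_i)$ of the form $(-1)^i\cdot(\text{linear in }i)$ plus a tiny $\pm\delta$, with $c$ chosen between the consecutive-index gap and the two-apart gap, to work, and the verification is a finite case-check over the pair types.

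The main obstacle is the upper-bound construction: unlike a bare path, the ladder forces \emph{two} incident edges at each internal vertex on each rail plus a rung, so a single monotone zig-zag does not immediately suffice — the diagonals $u_iw_{i+1}$ are non-edges lying ``between'' two edges and must be excluded, which is the delicate point. The resolution is to decouple the ``coarse'' coordinate (the $m_i$, handling rungs and far pairs and the gross placement of rail sums) from the ``fine'' coordinate (the $\varepsilon_i$, whose sole job is to break the tie between a rail edge $u_iu_{i+1}$ and the diagonal non-edge $u_iw_{i+1}$, which differ only in the sign of one $\varepsilon$-term). Once the two scales are separated, checking that every edge rank-sum lies in $(-c,c)$ and every non-edge rank-sum lies outside it reduces to a short bounded computation, which I would then carry out; the writeup would present the explicit ranks and thresholds (analogous to the linear-forest proof above) followed by that case analysis. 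Combining the two bounds yields $\Theta(P_n\square P_2)=2$ for $n\ge 2$.
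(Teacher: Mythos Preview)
Your lower bound is fine and matches the paper: $C_4$ sits as an induced subgraph of the ladder, so Proposition~\ref{prop} and Lemma~\ref{lemma} give $\Theta(P_n\square P_2)\ge 2$.

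The upper-bound construction, however, has a genuine gap. In your parametrization $r(u_i)=m_i+\varepsilon_i$, $r(w_i)=m_i-\varepsilon_i$ with the $\varepsilon_i$ a \emph{small} perturbation, write $A=m_i+m_{i+1}$, $B=\varepsilon_i+\varepsilon_{i+1}$, $D=\varepsilon_i-\varepsilon_{i+1}$. The two rail edges have sums $A\pm B$ and must lie in $(-c,c)$, so $|A|+|B|=\max(|A+B|,|A-B|)<c$. But \emph{both} diagonals $u_iw_{i+1}$ and $w_iu_{i+1}$ are non-edges of the ladder (your remark ``exactly one configuration giving a non-edge'' is not correct here), with sums $A\pm D$, and both must lie outside $(-c,c)$. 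Hence $\bigl||A|-|D|\bigr|=\min(|A+D|,|A-D|)\ge c$; since $|A|<c$ this forces $|D|\ge c+|A|\ge c$. So $|\varepsilon_i-\varepsilon_{i+1}|\ge c$ is \emph{necessary}, which is incompatible with the $\varepsilon_i$ being tiny. The two-scale idea with a coarse zig-zag $m_i$ and a fine $\varepsilon_i$ cannot separate the rails from the diagonals. (Relatedly, your concrete guess $r(u_i),r(w_i)\approx(-1)^i\cdot(\text{linear in }i)\pm\delta$ makes the rung sum $2m_i$ grow without bound, so rungs leave the window.)

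The fix is to reverse the roles of the two scales, and this is exactly what the paper does. Take $m_i=0$ (so the two rails are negatives of each other) and let the ``perturbation'' be the whole zig-zag: $r(u_i)=(-1)^{i-1}i$, $r(w_i)=-r(u_i)$, with thresholds $\theta_1=-\tfrac32$, $\theta_2=\tfrac32$. Then every rung sum is $0$, every rail sum is $\pm1$, each diagonal sum $u_iw_{i+1}$ or $w_iu_{i+1}$ has absolute value $2i+1\ge 3$, and every farther pair has absolute value at least $2$. All edges land in $[-\tfrac32,\tfrac32)$ and all non-edges outside it, giving a $(-\tfrac32,\tfrac32)$-representation. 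The verification is a one-line check rather than a delicate two-scale balance.
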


\begin{proof}
    Assuming $n\ge 2$, we have $\Theta(P_n\square P_2)\ge 2$ because $C_4$ is an induced subgraph of $P_n\square P_2$. And we can see $\Theta(P_n\square P_2)\le 2$ by taking $\theta_1=-\frac{3}{2}$ and $\theta_2=\frac{3}{2}$, and using the rank assignment showed in Figure \ref{ladder}.
    \begin{figure}[H]  
        \input ladder.tex
    \end{figure}
\end{proof}

\subsection{Tents}
A \emph{tent} with $n+1$ vertices, denoted by $T_n$, is constructed by connecting a vertex to every vertex on the path $P_n$.

It is easy to see that $\Theta(T_2)=\Theta(T_3)=1$.

\begin{theorem}
    If $n\ge 4$, then
    \begin{align*}
        \Theta(T_n)=3.
    \end{align*}
\end{theorem}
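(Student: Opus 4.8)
The plan is to establish $\Theta(T_n) = 3$ for $n \ge 4$ by proving the two inequalities $\Theta(T_n) \le 3$ and $\Theta(T_n) \ge 3$ separately. For the upper bound, I would exhibit an explicit $3$-threshold representation. Let $u$ be the apex vertex joined to every vertex of the path $P_n = w_1 w_2 \cdots w_n$. A natural attempt is to give $u$ a very large rank $R$ so that $r(u) + r(w_i)$ lands above the top threshold $\theta_3$ for every $i$ (contributing an odd count, hence an edge), while the path edges $w_i w_{i+1}$ are handled in the "middle band" between $\theta_1$ and $\theta_2$ — essentially re-using the two-threshold representation of $P_n$ due to Jamison and Sprague — and the path non-edges $w_i w_j$ with $|i-j| \ge 2$ are pushed either below $\theta_1$ or, crucially, into the band between $\theta_2$ and $\theta_3$ (an even count $= 2$, hence a non-edge). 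I would choose the path ranks alternating in sign with growing magnitude (as in the linear-forest and path constructions already in the paper), then pick $\theta_1 < \theta_2 < \theta_3$ and $R$ so that: every $r(w_i) + r(w_{i+1})$ lies in $[\theta_1, \theta_2)$; every $r(w_i) + r(w_j)$ with $|i - j| \ge 2$ lies in $(-\infty, \theta_1)$ or $[\theta_2, \theta_3)$; and every $r(u) + r(w_i) \ge \theta_3$. A short check that these three requirements are simultaneously satisfiable — only finitely many sums are involved — completes the upper bound.

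For the lower bound $\Theta(T_n) \ge 3$, I would first note $\Theta(T_n) \ge 2$ is immediate since $T_n$ contains an induced $C_4$ (for instance $u, w_1, w_2, w_3$ with $u$ adjacent to all three but $w_1 w_3$ a non-edge — wait, that is not $C_4$; rather take $u w_i w_{i+1} w_{i+2}$: $u$–$w_i$, $w_i$–$w_{i+1}$, $w_{i+1}$–$w_{i+2}$, $w_{i+2}$–$u$ are edges and $u$–$w_{i+1}$, $w_i$–$w_{i+2}$ — but $u$–$w_{i+1}$ IS an edge, so this is not induced $C_4$ either). Let me instead use that $T_n$ for $n \ge 4$ contains an induced $P_4$, so $\Theta(T_n) \ge 2$ by Proposition \ref{prop} and Lemma \ref{lemma}. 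To rule out $\Theta(T_n) = 2$, suppose for contradiction there is a $(\theta_1, \theta_2)$-representation $r$. Then for each vertex $v$, adjacency to another vertex $x$ is governed by whether $r(v) + r(x)$ falls in the single "band" $[\theta_1, \theta_2)$. The key structural observation is that in a $2$-threshold graph the neighborhood of any vertex $v$ is determined by an interval of rank values $[\theta_1 - r(v), \theta_2 - r(v))$; so the set $\{r(x) : x \in N(v)\}$ is an interval's worth of ranks. I would apply this to the apex $u$, whose neighborhood is all of $V(P_n)$, forcing all path ranks into one rank-interval $I_u = [\theta_1 - r(u), \theta_2 - r(u))$. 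Then I would apply it to path vertices: for $w_i$ with $2 \le i \le n-1$, $w_i$ is adjacent to exactly $u, w_{i-1}, w_{i+1}$ and non-adjacent to the other path vertices, which constrains where $r(w_{i-1}), r(w_{i+1})$ sit relative to the non-neighbors $r(w_j)$ inside the common interval $I_u$. Choosing $i$ appropriately (using $n \ge 4$ so there are enough path vertices, e.g. $w_1, w_2, w_3, w_4$), I expect to derive a contradiction analogous to the summing argument in Lemma \ref{lemma}: combining the "edge" inequalities $r(w_i)+r(w_{i+1}) \ge \theta_1$ or $< \theta_2$ with the "non-edge" inequalities for $w_i w_{i+2}$ and with the interval constraint from $u$ yields an impossible chain of inequalities.

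The main obstacle will be the lower-bound argument: unlike the clean four-inequality contradiction for $2P_2$, here the apex vertex forces all path ranks into one band, and I must extract a contradiction purely from the \emph{path}'s adjacency pattern restricted to that band. The cleanest route is probably to show that $P_4$ together with a dominating vertex $u$ (i.e. the graph $T_4$) already has $\Theta(T_4) \ge 3$ by an exhaustive case analysis on the cyclic order of the five ranks $r(u), r(w_1), \dots, r(w_4)$ on the real line — then invoke Proposition \ref{prop}, since $T_4$ is an induced subgraph of $T_n$ for all $n \ge 4$. Within that finite analysis, the heart is: $u$ adjacent to all of $w_1, \dots, w_4$ means all four sums $r(u) + r(w_i)$ lie in $[\theta_1, \theta_2)$, which is a strong constraint forcing the $r(w_i)$ to be close together; meanwhile the $P_4$ adjacency requires $r(w_1)+r(w_2), r(w_2)+r(w_3), r(w_3)+r(w_4)$ in the band but $r(w_1)+r(w_3), r(w_1)+r(w_4), r(w_2)+r(w_4)$ outside it — and one checks, by the interval characterization of neighborhoods, that no placement of four reals satisfies both patterns, giving $\Theta(T_4) \ge 3$ and hence $\Theta(T_n) \ge 3$.
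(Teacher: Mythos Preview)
Your proposal is correct and follows essentially the same approach as the paper: the upper bound via an explicit construction with a large apex rank and alternating-sign path ranks, and the lower bound by reducing to $T_4$ (via Proposition~\ref{prop}, since $T_4$ is an induced subgraph of every $T_n$ with $n\ge 4$) and then a case analysis on a hypothetical $2$-threshold representation of $T_4$. The paper executes that case analysis by splitting on whether the nonedge sum $r(w_1)+r(w_3)$ lies above $\theta_2$ or below $\theta_1$ and chasing inequalities from there --- exactly the step you summarize as ``one checks.''
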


\begin{proof}
    Firstly, if we take $\theta_1=-\frac{3}{2}$, $\theta_2=\frac{3}{2}$, and $\theta_3=5n$, then the rank assignment in Figure \ref{tent} shows that $\Theta(T_n)\le 3$.

    \begin{figure}[H]  
        \tikzset{every picture/.style={line width=0.75pt}} 

\begin{tikzpicture}[x=0.75pt,y=0.75pt,yscale=-1,xscale=1]

\draw  [color={rgb, 255:red, 0; green, 0; blue, 0 }  ,draw opacity=1 ][fill={rgb, 255:red, 0; green, 0; blue, 0 }  ,fill opacity=1 ] (112.5,134) .. controls (112.5,130.96) and (114.96,128.5) .. (118,128.5) .. controls (121.04,128.5) and (123.5,130.96) .. (123.5,134) .. controls (123.5,137.04) and (121.04,139.5) .. (118,139.5) .. controls (114.96,139.5) and (112.5,137.04) .. (112.5,134) -- cycle ;
\draw   (294,38) -- (470,134) -- (118,134) -- cycle ;
\draw    (294,38) -- (174,134) ;
\draw  [color={rgb, 255:red, 0; green, 0; blue, 0 }  ,draw opacity=1 ][fill={rgb, 255:red, 0; green, 0; blue, 0 }  ,fill opacity=1 ] (168.5,134) .. controls (168.5,130.96) and (170.96,128.5) .. (174,128.5) .. controls (177.04,128.5) and (179.5,130.96) .. (179.5,134) .. controls (179.5,137.04) and (177.04,139.5) .. (174,139.5) .. controls (170.96,139.5) and (168.5,137.04) .. (168.5,134) -- cycle ;
\draw    (294,38) -- (225,134) ;
\draw    (294,38) -- (268,134) ;
\draw    (294,38) -- (310,134) ;
\draw  [color={rgb, 255:red, 0; green, 0; blue, 0 }  ,draw opacity=1 ][fill={rgb, 255:red, 0; green, 0; blue, 0 }  ,fill opacity=1 ] (288.5,38) .. controls (288.5,34.96) and (290.96,32.5) .. (294,32.5) .. controls (297.04,32.5) and (299.5,34.96) .. (299.5,38) .. controls (299.5,41.04) and (297.04,43.5) .. (294,43.5) .. controls (290.96,43.5) and (288.5,41.04) .. (288.5,38) -- cycle ;
\draw  [color={rgb, 255:red, 0; green, 0; blue, 0 }  ,draw opacity=1 ][fill={rgb, 255:red, 0; green, 0; blue, 0 }  ,fill opacity=1 ] (219.5,134) .. controls (219.5,130.96) and (221.96,128.5) .. (225,128.5) .. controls (228.04,128.5) and (230.5,130.96) .. (230.5,134) .. controls (230.5,137.04) and (228.04,139.5) .. (225,139.5) .. controls (221.96,139.5) and (219.5,137.04) .. (219.5,134) -- cycle ;
\draw  [color={rgb, 255:red, 0; green, 0; blue, 0 }  ,draw opacity=1 ][fill={rgb, 255:red, 0; green, 0; blue, 0 }  ,fill opacity=1 ] (262.5,134) .. controls (262.5,130.96) and (264.96,128.5) .. (268,128.5) .. controls (271.04,128.5) and (273.5,130.96) .. (273.5,134) .. controls (273.5,137.04) and (271.04,139.5) .. (268,139.5) .. controls (264.96,139.5) and (262.5,137.04) .. (262.5,134) -- cycle ;
\draw  [color={rgb, 255:red, 0; green, 0; blue, 0 }  ,draw opacity=1 ][fill={rgb, 255:red, 0; green, 0; blue, 0 }  ,fill opacity=1 ] (304.5,134) .. controls (304.5,130.96) and (306.96,128.5) .. (310,128.5) .. controls (313.04,128.5) and (315.5,130.96) .. (315.5,134) .. controls (315.5,137.04) and (313.04,139.5) .. (310,139.5) .. controls (306.96,139.5) and (304.5,137.04) .. (304.5,134) -- cycle ;
\draw    (294,38) -- (356,134) ;
\draw    (294,38) -- (411,134) ;
\draw  [color={rgb, 255:red, 0; green, 0; blue, 0 }  ,draw opacity=1 ][fill={rgb, 255:red, 0; green, 0; blue, 0 }  ,fill opacity=1 ] (350.5,134) .. controls (350.5,130.96) and (352.96,128.5) .. (356,128.5) .. controls (359.04,128.5) and (361.5,130.96) .. (361.5,134) .. controls (361.5,137.04) and (359.04,139.5) .. (356,139.5) .. controls (352.96,139.5) and (350.5,137.04) .. (350.5,134) -- cycle ;
\draw  [color={rgb, 255:red, 0; green, 0; blue, 0 }  ,draw opacity=1 ][fill={rgb, 255:red, 0; green, 0; blue, 0 }  ,fill opacity=1 ] (405.5,134) .. controls (405.5,130.96) and (407.96,128.5) .. (411,128.5) .. controls (414.04,128.5) and (416.5,130.96) .. (416.5,134) .. controls (416.5,137.04) and (414.04,139.5) .. (411,139.5) .. controls (407.96,139.5) and (405.5,137.04) .. (405.5,134) -- cycle ;
\draw  [color={rgb, 255:red, 0; green, 0; blue, 0 }  ,draw opacity=1 ][fill={rgb, 255:red, 0; green, 0; blue, 0 }  ,fill opacity=1 ] (464.5,134) .. controls (464.5,130.96) and (466.96,128.5) .. (470,128.5) .. controls (473.04,128.5) and (475.5,130.96) .. (475.5,134) .. controls (475.5,137.04) and (473.04,139.5) .. (470,139.5) .. controls (466.96,139.5) and (464.5,137.04) .. (464.5,134) -- cycle ;

\draw (110,142) node [anchor=north west][inner sep=0.75pt]  [color={rgb, 255:red, 0; green, 0; blue, 0 }  ,opacity=1 ] [align=left] {$\displaystyle 1$};
\draw (157,142) node [anchor=north west][inner sep=0.75pt]  [color={rgb, 255:red, 0; green, 0; blue, 0 }  ,opacity=1 ] [align=left] {$\displaystyle -2$};
\draw (217,142) node [anchor=north west][inner sep=0.75pt]  [color={rgb, 255:red, 0; green, 0; blue, 0 }  ,opacity=1 ] [align=left] {$\displaystyle 3$};
\draw (255,142) node [anchor=north west][inner sep=0.75pt]  [color={rgb, 255:red, 0; green, 0; blue, 0 }  ,opacity=1 ] [align=left] {$\displaystyle -4$};
\draw (356,142) node [anchor=north west][inner sep=0.75pt]  [color={rgb, 255:red, 0; green, 0; blue, 0 }  ,opacity=1 ] [align=left] {...};
\draw (448,140) node [anchor=north west][inner sep=0.75pt]  [color={rgb, 255:red, 0; green, 0; blue, 0 }  ,opacity=1 ] [align=left] {$\displaystyle ( -1)^{n-1} n$};
\draw (279,17) node [anchor=north west][inner sep=0.75pt]  [color={rgb, 255:red, 0; green, 0; blue, 0 }  ,opacity=1 ] [align=left] {$\displaystyle 10n$};

\end{tikzpicture}
\caption{Rank assignment for $T_n$.}
\label{tent}
    \end{figure}

    If $n\ge 4$, then $P_4$ is an induced subgraph of $T_n$, so $\Theta(T_n)\ge 2$. Then, to show that $\Theta(T_n)=3$, we only need to eliminate the possibility that $\Theta(T_n)=2$. Also, we know that $T_4$ is an induced subgraph of $T_n$ for any $n\ge 4$, so it suffices to show that $\Theta(T_4)\neq 2$.

    Assume $f$ is a rank assignment such that the five vertices in $T_4$ receive ranks $a$, $b_1$, $b_2$, $b_3$, and $b_4$ as showed in Figure \ref{T4}, and $\Theta(T_4)=2$ with thresholds $\theta_1<\theta_2$.
    \begin{figure}[H]  
        \tikzset{every picture/.style={line width=0.75pt}} 

\begin{tikzpicture}[x=0.75pt,y=0.75pt,yscale=-1,xscale=1]

\draw  [color={rgb, 255:red, 0; green, 0; blue, 0 }  ,draw opacity=1 ][fill={rgb, 255:red, 0; green, 0; blue, 0 }  ,fill opacity=1 ] (120.5,347) .. controls (120.5,343.96) and (122.96,341.5) .. (126,341.5) .. controls (129.04,341.5) and (131.5,343.96) .. (131.5,347) .. controls (131.5,350.04) and (129.04,352.5) .. (126,352.5) .. controls (122.96,352.5) and (120.5,350.04) .. (120.5,347) -- cycle ;
\draw   (209.5,251) -- (293,347) -- (126,347) -- cycle ;
\draw    (209.5,251) -- (182,347) ;
\draw  [color={rgb, 255:red, 0; green, 0; blue, 0 }  ,draw opacity=1 ][fill={rgb, 255:red, 0; green, 0; blue, 0 }  ,fill opacity=1 ] (176.5,347) .. controls (176.5,343.96) and (178.96,341.5) .. (182,341.5) .. controls (185.04,341.5) and (187.5,343.96) .. (187.5,347) .. controls (187.5,350.04) and (185.04,352.5) .. (182,352.5) .. controls (178.96,352.5) and (176.5,350.04) .. (176.5,347) -- cycle ;
\draw    (209.5,251) -- (233,347) ;
\draw  [color={rgb, 255:red, 0; green, 0; blue, 0 }  ,draw opacity=1 ][fill={rgb, 255:red, 0; green, 0; blue, 0 }  ,fill opacity=1 ] (204,251) .. controls (204,247.96) and (206.46,245.5) .. (209.5,245.5) .. controls (212.54,245.5) and (215,247.96) .. (215,251) .. controls (215,254.04) and (212.54,256.5) .. (209.5,256.5) .. controls (206.46,256.5) and (204,254.04) .. (204,251) -- cycle ;
\draw  [color={rgb, 255:red, 0; green, 0; blue, 0 }  ,draw opacity=1 ][fill={rgb, 255:red, 0; green, 0; blue, 0 }  ,fill opacity=1 ] (227.5,347) .. controls (227.5,343.96) and (229.96,341.5) .. (233,341.5) .. controls (236.04,341.5) and (238.5,343.96) .. (238.5,347) .. controls (238.5,350.04) and (236.04,352.5) .. (233,352.5) .. controls (229.96,352.5) and (227.5,350.04) .. (227.5,347) -- cycle ;
\draw  [color={rgb, 255:red, 0; green, 0; blue, 0 }  ,draw opacity=1 ][fill={rgb, 255:red, 0; green, 0; blue, 0 }  ,fill opacity=1 ] (287.5,347) .. controls (287.5,343.96) and (289.96,341.5) .. (293,341.5) .. controls (296.04,341.5) and (298.5,343.96) .. (298.5,347) .. controls (298.5,350.04) and (296.04,352.5) .. (293,352.5) .. controls (289.96,352.5) and (287.5,350.04) .. (287.5,347) -- cycle ;

\draw (118,355) node [anchor=north west][inner sep=0.75pt]  [color={rgb, 255:red, 0; green, 0; blue, 0 }  ,opacity=1 ] [align=left] {$\displaystyle b_{1}$};
\draw (165,355) node [anchor=north west][inner sep=0.75pt]  [color={rgb, 255:red, 0; green, 0; blue, 0 }  ,opacity=1 ] [align=left] {$\displaystyle b_{2}$};
\draw (225,355) node [anchor=north west][inner sep=0.75pt]  [color={rgb, 255:red, 0; green, 0; blue, 0 }  ,opacity=1 ] [align=left] {$\displaystyle b_{3}$};
\draw (285,355) node [anchor=north west][inner sep=0.75pt]  [color={rgb, 255:red, 0; green, 0; blue, 0 }  ,opacity=1 ] [align=left] {$\displaystyle b_{4}$};
\draw (204,230) node [anchor=north west][inner sep=0.75pt]  [color={rgb, 255:red, 0; green, 0; blue, 0 }  ,opacity=1 ] [align=left] {$\displaystyle a$};

\end{tikzpicture}
\caption{Rank assignment for $T_4$.}
\label{T4}
    \end{figure}

    By definition, we know that $a+b_1$, $a+b_2$, $a+b_3$, $a+b_4$, $b_1+b_2$, $b_2+b_3$, and $b_3+b_4$ are all between $\theta_1$ and $\theta_2$; we also know that either $b_1+b_3\ge \theta_2$, or $b_1+b_3<\theta_1$.

    If $b_1+b_3\ge \theta_2$, then by $b_1+b_2<\theta_2\le b_1+b_3$, we know that $b_2<b_3$; and by $a+b_3<\theta_2\le b_1+b_3$, we know that $a<b_1$. Then, if $b_2+b_4\ge \theta_2$, then by  $b_3+b_4<\theta_2\le b_2+b_4$, we have that $b_3<b_2$, contradiction. So we must have $b_2+b_4<\theta_1$. By $b_2+b_4<\theta_1\le a+b_2$, we know $b_4<a$. Now, if $b_1+b_4\ge \theta_2$, then we will have $a+b_1<\theta_2\le b_1+b_4$, so $a<b_4$, contradiction; and if $b_1+b_4<\theta_1$, then we will have $b_1+b_4<\theta_1\le a+b_4$, so $b_1<a$, contradiction. Thus, we conclude that $b_1+b_3\ge \theta_2$ is not possible.

    Similarly, we can prove that $b_1+b_3<\theta_1$ is not possible either. So $\Theta(T_4)\neq 2$, and this is sufficient to finish the proof.
\end{proof}

\section{Cluster graphs with small clusters}

\subsection{When $n_3$ is very small}
We prove (II).(i) of Theorem \ref{main} by making usage of Proposition \ref{prop} and the following lemma, which is just a special case of Lemma \ref{lemma}, because $K_2$ is the same as $P_2$. The proof of (I).(i) is very similar, so we omit it.

\begin{lemma} \label{lemmanK2}
For the cluster graphs with each cluster having size $2$,
    \[ \Theta(n K_2)=\begin{cases} 
          1 & if\ n=1, \\
          2 & if\ n\ge 2.
       \end{cases}
    \]
\end{lemma}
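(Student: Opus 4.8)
The plan is to prove the two cases separately, both using Proposition~\ref{prop}. For the lower bound when $n\ge 2$, note that $2K_2$ is an induced subgraph of $nK_2$, and $2K_2$ is precisely $2P_2$ since $K_2=P_2$; hence $\Theta(nK_2)\ge\Theta(2P_2)=2$ by Proposition~\ref{prop} and Lemma~\ref{lemma}. For the trivial case $n=1$, the graph $K_2$ is a single edge, which is clearly a threshold graph (assign rank $1$ to both vertices and take $\theta=1$), so $\Theta(K_2)=1$; it cannot be $0$ since a $0$-threshold graph has no edges.

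For the upper bound when $n\ge 2$, I would exhibit an explicit $(\theta_1,\theta_2)$-representation with $\theta_1=-\tfrac32$ and $\theta_2=\tfrac32$, mimicking the linear-forest construction already used in Section~2. Label the clusters $1,\dots,n$, and in cluster $i$ assign the two vertices ranks $2i-1$ and $-(2i-1)+1=2-2i$; more robustly, just take ranks $2i-1$ and $-2i$ in cluster $i$ so that the within-cluster sum is $-1$. Then every edge rank sum equals $-1$, which lies strictly between $\theta_1$ and $\theta_2$, so the edge condition (an odd number, namely one, of thresholds is met) holds. For a non-edge between cluster $i$ and cluster $j$ with $i<j$, the possible rank sums are $(2i-1)+(2j-1)$, $(2i-1)+(-2j)$, $(-2i)+(2j-1)$, and $(-2i)+(-2j)$; one checks these are respectively $\ge 2+2-2=2>\tfrac32$ (since $i\ge1,j\ge2$ forces the value $\ge 2i+2j-2\ge 4$... wait, with $i\ge 1$ this is $\ge 2\cdot1+2\cdot2-2=4$), the value $2i-1-2j\le -3<-\tfrac32$, the value $2j-1-2i\ge 2\cdot2-1-2\cdot1=1$ — this last one is the delicate case. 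To avoid it I would instead space the clusters out: assign ranks $10^i$ and $-10^i-1$ to the two vertices of cluster $i$, so each within-cluster sum is $-1$, while any cross-cluster sum has absolute value dominated by the larger power of ten and is therefore either $>\tfrac32$ or $<-\tfrac32$. This makes every non-edge rank sum avoid the interval $[\theta_1,\theta_2]$, as required.

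The only step needing care is verifying that the chosen ranks push every cross-cluster sum strictly outside $[-\tfrac32,\tfrac32]$; with a geometric spacing like $10^i$ this is immediate, since for $i\le j$ the sum of any one vertex from cluster $i$ and one from cluster $j$ is $(\pm 10^i\mp 0\text{ or }-1)+(\pm 10^j\text{ or }-10^j-1)$, whose magnitude exceeds $10^j-10^i-2\ge 10-1-2=7>\tfrac32$ when $i<j$, and equals a within-cluster sum of $-1$ only when $i=j$. Thus $\Theta(nK_2)\le 2$, and combined with the lower bound we get equality for $n\ge 2$. I do not expect any genuine obstacle here; the lemma is essentially a restatement of the $2P_2$ computation from Lemma~\ref{lemma} together with a routine padding argument, and it is used downstream only to seed the induction in the $n_3$-small case of Theorem~\ref{main}.
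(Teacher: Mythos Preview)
Your argument is correct: the lower bound via $2K_2=2P_2$ and Lemma~\ref{lemma}, the trivial $n=1$ case, and the geometric-spacing construction for the upper bound all go through (your arithmetic in the final magnitude estimate is a bit off---the minimum cross-sum magnitude is $10^j-10^i-1$, not $-2$, and with $i\ge 1$, $j\ge 2$ the bound is $89$, not $7$---but the conclusion is unaffected). The paper takes exactly the same route in spirit but dispatches the lemma in a single line: it simply observes that $K_2=P_2$, so $nK_2$ is a linear forest and Lemma~\ref{lemma} (together with the linear-forest construction already given in Section~2.1, which uses the arithmetic spacing $1,-2,\ 4,-5,\ 7,-8,\dots$ with thresholds $\pm\tfrac32$) handles everything. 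Your first attempt with ranks $2i-1,-2i$ fails precisely because the gap between consecutive clusters is too small; the paper's linear-forest assignment inserts an extra gap of $2$ between paths, which is exactly what repairs the ``delicate case'' you flagged, so you could also have salvaged that attempt rather than switching to geometric growth.
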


Now let us prove (II).(i) of Theorem \ref{main}.

\begin{proof}[Proof of (II).(i)]
    We prove: For $n_1+n_2+n_3\ge 1$, if $n_3\le 1$, then
    \[ \Theta(n_1 K_1\cup n_2 K_2\cup n_3 K_3)=\begin{cases} 
          0 & if\ n_2+n_3=0, \\
          1 & if\ n_2+n_3=1, \\
          2 & if\ n_2+n_3\ge 2.
       \end{cases}
    \]
    
    Firstly, if $n_2+n_3=0$, then the graph has no edge, only $n_1$ independent vertices, so $\Theta(n_1 K_1\cup n_2 K_2\cup n_3 K_3)=0$.

    Secondly, if $n_2+n_3=1$, then $K_2$ is an induced subgraph of $n_1 K_1\cup n_2 K_2\cup n_3 K_3$, so by Proposition \ref{prop} and Lemma \ref{lemmanK2}, we know $\Theta(n_1 K_1\cup n_2 K_2\cup n_3 K_3)\ge 1$. Now we let $\theta_1=0$, let all vertices in that single $K_2$-cluster or $K_3$-cluster have rank $1$, and let all vertices in $n_1 K_1$ have rank $-2$. Then every edge rank sum is $1+1=2>0$, and every nonedge rank sum is $1-2=-1$ or $-2-2=-4$, both smaller than $0$. So we have a $(0)$-representation of $n_1 K_1\cup n_2 K_2\cup n_3 K_3$. And thus $\Theta(n_1 K_1\cup n_2 K_2\cup n_3 K_3)=1$.

    Lastly, if $n_2+n_3\ge 2$, then $(n_2+n_3)K_2$ is an induced subgraph of $n_1 K_1\cup n_2 K_2\cup n_3 K_3$, so by Proposition \ref{prop} and Lemma \ref{lemmanK2}, we know $\Theta(n_1 K_1\cup n_2 K_2\cup n_3 K_3)\ge 2$. Now if $n_3=0$, we choose $\theta_1=-\frac{1}{2}$ and $\theta_2=\frac{1}{2}$. Denote the set of $K_1$-clusters by $\{K_1^1, K_1^2, ..., K_1^{n_1}\}$, with each $K_1^i$ consisting of one single vertex $u_i$. Denote the set of $K_2$-clusters by $\{K_2^1, K_2^2, ..., K_2^{n_2}\}$, with each $K_2^j$ consisting of two vertices $v_j$, $w_j$, and an edge between them. Let $r(u_i)=i$ for any $i\in[n_1]$. Let $r(v_j)=n_1+j$ and $r(w_j)=-n_1-j$ for any $j\in [n_2]$. Then every edge rank sum is $n_1+j-n_1-j=0$, and every nonedge rank sum is $i_1+i_2>\frac{1}{2}$ for $i_1,i_2\in [n_1]$, or $i+n_1+j>\frac{1}{2}$ for $i\in [n_1]$ and $j\in [n_2]$, or $i-n_1-j<-\frac{1}{2}$ for $i\in [n_1]$ and $j\in [n_2]$. So we have a $(-\frac{1}{2},\frac{1}{2})$-representation of $n_1 K_1\cup n_2 K_2\cup n_3 K_3$, and thus $\Theta(n_1 K_1\cup n_2 K_2\cup n_3 K_3)=2$. If $n_3=1$, then we choose the same thresholds and assign the same ranks to the vertices in $n_1 K_1\cup n_2 K_2$ as what we did in the case $n_3=0$, and let all three vertices in the $K_3$-cluster have rank 0. Now it is easy to check that every edge rank sum is 0, and every nonedge rank sum is either greater than $\frac{1}{2}$ or smaller than $-\frac{1}{2}$, so again we have a $(-\frac{1}{2},\frac{1}{2})$-representation of $n_1 K_1\cup n_2 K_2\cup n_3 K_3$, and thus $\Theta(n_1 K_1\cup n_2 K_2\cup n_3 K_3)=2$.
\end{proof}

\subsection{A review}

We will briefly review Kittipassorn and Sumalroj's methodology for proving Theorem \ref{ks3}. On the basis of it, we get $n_1 K_1\cup n_2 K_2$ involved and prove (II).(ii). The proof of (I).(ii) is very similar.

Let us begin the review. Firstly, to give a lower bound on $\Theta(n_3 K_3)$, we assign a color to each edge in $n_3 K_3$ in the following fashion:

In a $(\theta_1, \theta_2,..., \theta_k)$-representation of $n_3 K_3$ where $\theta_1<\theta_2<...<\theta_k$, an edge $xy$ is colored with color $i$, for $i\in\{1,2,..., \lceil \frac{k}{2}\rceil\}$, if $r(x)+r(y)\in[\theta_{2i-1},\theta_{2i})$ or $r(x)+r(y)\in[\theta_{2\lceil \frac{k}{2}\rceil-1},\infty)$ in the case $i=\lceil \frac{k}{2}\rceil$ and $k$ is odd. A $K_3$-cluster is called $ij\ell$ if its edges are colored by $i$, $j$, and $\ell$.

There are two results indicating that two $K_3$-clusters cannot have too much similarity.

\begin{lemma}[Puleo \cite{Pu}] \label{ijl}
    In a $(\theta_1, \theta_2,..., \theta_k)$-representation of $n_3 K_3$, for any colors $i, j, \ell\in \{1,2,..., \lceil \frac{k}{2}\rceil\}$, $ij\ell$ cannot appear more than once.
\end{lemma}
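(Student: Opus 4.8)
The plan is to argue by contradiction: suppose two distinct $K_3$-clusters $T=\{x,y,z\}$ and $T'=\{x',y',z'\}$ both have type $ij\ell$ with $i\le j\le\ell$, and extract a contradiction from the rank sums running across $T$ and $T'$. Write $B_m:=[\theta_{2m-1},\theta_{2m})$ for $m<\lceil k/2\rceil$ and $B_m:=[\theta_{2m-1},\infty)$ if $m=\lceil k/2\rceil$ and $k$ is odd, so that an edge $uv$ has color $m$ exactly when $r(u)+r(v)\in B_m$.

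First I would record a preliminary observation that pins down which edge of a triangle carries which color. If a triangle has vertex ranks $a\le b\le c$, its three edge sums satisfy $a+b\le a+c\le b+c$; since a larger rank sum always lies in a band of larger-or-equal index (the bands are separated by the non-edge gaps), the colors of the edges with sums $a+b,\ a+c,\ b+c$ are nondecreasing in that order, and because the color multiset is $\{i\le j\le\ell\}$ we must have $a+b\in B_i$, $a+c\in B_j$, $b+c\in B_\ell$. This holds whether or not $i,j,\ell$ are distinct. Applying it to $T'$ with ranks $a'\le b'\le c'$ gives $a'+b'\in B_i$, $a'+c'\in B_j$, $b'+c'\in B_\ell$. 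The key structural fact I would use is that each of the nine sums $r(u)+r(v)$ with $u\in T$, $v\in T'$ is a non-edge, hence lies in a non-edge gap and therefore in no band $B_m$.

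Then I would run an inequality cascade. By symmetry assume $c\ge c'$. Since $b'+c\ge b'+c'\ge\theta_{2\ell-1}$ and $b'+c$ is a non-edge (so not in $B_\ell$), it must exceed the top of $B_\ell$, i.e. $b'+c\ge\theta_{2\ell}$ — and if $B_\ell$ is the unbounded top band this step instead yields $b'+c\in B_\ell$, an immediate contradiction; together with $b+c<\theta_{2\ell}$ this forces $b<b'$. The same argument with the band $B_j$: $a'+c\ge a'+c'\ge\theta_{2j-1}$ is a non-edge, hence $a'+c\ge\theta_{2j}$, while $a+c<\theta_{2j}$, so $a<a'$. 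Finally, since $a<a'$ and $b<b'$ we get $\theta_{2i-1}\le a+b<a+b'<a'+b'<\theta_{2i}$, so $a+b'\in B_i$; but $a+b'$ is a cross non-edge, a contradiction.

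The only real obstacle is bookkeeping rather than ideas: making sure (i) the edge-sum ordering inside a triangle genuinely determines the color–edge correspondence even when $i,j,\ell$ coincide, and (ii) the boundary case in which $B_\ell$ (or $B_j$) is the unbounded top band — occurring when $k$ is odd — is handled, where the ``$\ge\theta_{2\ell}$'' step is replaced by a direct contradiction. Once these are settled, the cascade $c\ge c'\Rightarrow b<b'\Rightarrow a<a'\Rightarrow a+b'\in B_i$ closes the proof without any computation.
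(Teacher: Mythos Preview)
The paper does not supply a proof of this lemma at all: it is quoted verbatim from Puleo~\cite{Pu} as background for the review in Section~3.2, so there is no ``paper's own proof'' to compare against. That said, your argument is correct and is essentially the standard inequality-cascade proof one finds in Puleo's paper. The ordering observation (ranks $a\le b\le c$ force $a+b\in B_i$, $a+c\in B_j$, $b+c\in B_\ell$), the WLOG $c\ge c'$, and the three-step chain
\[
c\ge c' \ \Longrightarrow\ b'+c\ge\theta_{2\ell}\ \Longrightarrow\ b<b',\qquad
a'+c\ge\theta_{2j}\ \Longrightarrow\ a<a',\qquad
\theta_{2i-1}\le a+b<a+b'<a'+b'<\theta_{2i}
\]
are exactly the right moves, and your handling of the unbounded-top-band boundary case is also correct (if $B_\ell$ is unbounded then $b'+c\ge\theta_{2\ell-1}$ already places a non-edge sum in $B_\ell$, and if only $B_j$ or $B_i$ is unbounded then $j=\ell$ or $i=j=\ell$ and the contradiction occurs at the earlier step). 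One cosmetic point: you might state explicitly that a non-edge sum $\ge\theta_{2\ell-1}$ must in fact be $\ge\theta_{2\ell}$ because non-edge sums avoid \emph{every} band $B_m$, not just $B_\ell$; you use this implicitly but it is the only place a reader might pause.
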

In other words, two $K_3$-clusters cannot be colored in the same way.
\begin{lemma}[Kittipassorn and Sumalroj \cite{KS}] \label{ijjill}
    In a $(\theta_1, \theta_2,..., \theta_k)$-representation of $n_3 K_3$, for any colors $i,j,\ell\in \{1,2,..., \lceil \frac{k}{2}\rceil\}$, $ijj$ and $i\ell\ell$ cannot simultaneously appear.
\end{lemma}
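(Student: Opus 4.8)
The statement to prove is Lemma~\ref{ijjill}: in a $(\theta_1,\dots,\theta_k)$-representation of $n_3 K_3$, for any colors $i,j,\ell$, the patterns $ijj$ and $i\ell\ell$ cannot both occur. My plan is to argue by contradiction, supposing there are two $K_3$-clusters, one colored $ijj$ and one colored $i\ell\ell$, and to extract a cyclic chain of strict inequalities on the vertex ranks that cannot be simultaneously satisfied. The basic mechanism is the same ``sum around a cycle'' trick used to prove $\Theta(2P_2)\ge 2$ in Lemma~\ref{lemma}: each edge of a triangle, being colored $i$, forces its rank sum into the half-open band $[\theta_{2i-1},\theta_{2i})$ (or the top ray), and comparing rank sums of two edges sharing a vertex converts a band containment into a strict comparison between the two non-shared endpoints.

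First I would set up notation: let the $ijj$ triangle have vertices $x_1,x_2,x_3$ with $r(x_1)+r(x_2)$ in band $i$ and $r(x_1)+r(x_3),\,r(x_2)+r(x_3)$ in band $j$; let the $i\ell\ell$ triangle have vertices $y_1,y_2,y_3$ with $r(y_1)+r(y_2)$ in band $i$ and $r(y_1)+r(y_3),\,r(y_2)+r(y_3)$ in band $\ell$. The key extra constraint is that the six cross-pairs $x_py_q$ are \emph{non}-edges, so each cross rank sum lies strictly outside every band, i.e.\ it falls in some gap $[\theta_{2a},\theta_{2a+1})$ between consecutive bands (or below $\theta_1$, or, when $k$ is even, above $\theta_k$). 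Within a single triangle, the two equal-colored edges give $r(x_1)+r(x_3)$ and $r(x_2)+r(x_3)$ in the same band $j$, so $|r(x_1)-r(x_2)|<\theta_{2j}-\theta_{2j-1}$, the band width; similarly $|r(y_1)-r(y_2)|$ is less than the width of band $\ell$. The plan is to play the $i$-colored edges $x_1x_2$ and $y_1y_2$ (both in band $i$) against the $j$- and $\ell$-colored edges and against the cross non-edges to derive that the rank sums of the cross-pairs must lie in a gap that is ``pinned'' between the band-$i$ data of one triangle and the band-$j$ (resp.\ band-$\ell$) data of the other, and then a counting/ordering argument on which gap this is forces the cross sums of the $ijj$ cluster and those of the $i\ell\ell$ cluster to demand the same gap sit on two different sides, contradiction.

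Concretely, the main calculation I expect is: from $r(x_1)+r(x_2)\in$ band $i$ and $r(y_1)+r(y_2)\in$ band $i$ we get $|(r(x_1)+r(x_2))-(r(y_1)+r(y_2))|$ less than the width of band $i$; combine this with the tight control on $r(x_1)-r(x_2)$ (width of band $j$) and $r(y_1)-r(y_2)$ (width of band $\ell$) to confine all four cross sums $r(x_p)+r(y_q)$ to a short interval, of length roughly the sum of the three band widths, centered near $\tfrac12[(r(x_1)+r(x_2))+(r(y_1)+r(y_2))]$, which itself is near the center of band $i$; but each such cross sum must dodge \emph{every} band, in particular band $i$ — so all four cross sums must be squeezed into whichever single gap is adjacent to band $i$ on one side. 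Then using the sign of, say, $r(x_3)$ relative to the $x_p$'s (forced by the band-$j$ edges being lower or higher than the band-$i$ edge $x_1x_2$, exactly as in the $T_4$ argument where ``$b_2<b_3$'' and ``$a<b_1$'' were deduced) one pins down which side the $ijj$-cluster pushes its cross sums; doing the same for the $i\ell\ell$-cluster pushes to the other side; these are incompatible, giving the contradiction.

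The main obstacle, and the place requiring care, is the bookkeeping of the half-open intervals and the ``which gap'' case analysis — one must handle the boundary cases cleanly: whether band $i$ is the top ray (when $k$ is odd and $i=\lceil k/2\rceil$), whether the relevant adjacent gap is the unbounded one below $\theta_1$ or above $\theta_k$, and the degenerate subcases where $i=j$ or $i=\ell$ (so the ``$ijj$'' cluster is really monochromatic). I would dispatch these by noting that a monochromatic or near-monochromatic cluster makes all three of its pairwise sums live in one band, which only \emph{tightens} the squeeze and makes the contradiction easier; and by observing that for the unbounded gaps the relevant inequality degenerates to a one-sided bound, which still suffices. I would also want to double-check that Lemma~\ref{ijl} (no two clusters colored identically) is not silently needed here — the present lemma is about the \emph{combination} $ijj$ with $i\ell\ell$ even when $j\ne\ell$, so the argument must not collapse to Lemma~\ref{ijl}'s case, and the contradiction should come purely from the cross non-edges plus the two in-cluster band widths, independent of whether $j=\ell$.
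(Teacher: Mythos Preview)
The paper does not actually prove this lemma --- it is quoted from \cite{KS} as part of the review in Section~3.2 --- so there is no in-paper argument to compare against. That said, your proposed mechanism contains a real error, not just a gap in detail.

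Your key step is the claim that the four cross sums $r(x_p)+r(y_q)$, $p,q\in\{1,2\}$, being confined to a short interval whose center lies in band $i$, must therefore all lie in \emph{one} gap adjacent to band $i$. This is false: in fact they are forced to \emph{straddle} band $i$. From
\[
(r(x_1)+r(y_1))+(r(x_2)+r(y_2))=(r(x_1)+r(x_2))+(r(y_1)+r(y_2))\in[2\theta_{2i-1},2\theta_{2i}),
\]
together with the fact that both summands on the left avoid band $i$, one of them must be $<\theta_{2i-1}$ and the other $\ge\theta_{2i}$; the same holds for the pair $r(x_1)+r(y_2),\,r(x_2)+r(y_1)$. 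So your ``squeeze into a single gap'' picture cannot occur, and the subsequent plan of having $x_3$ and $y_3$ push the four cross sums to opposite sides collapses --- those four sums already sit on both sides, with no contradiction yet.

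The correct contradiction is extracted \emph{from} this straddling. After relabelling so that $r(x_1)+r(y_1)<\theta_{2i-1}\le r(x_1)+r(x_2)$ and $r(x_2)+r(y_2)\ge\theta_{2i}>r(x_1)+r(x_2)$, a short case split on which of $r(x_1)+r(y_2)$, $r(x_2)+r(y_1)$ lies below band $i$ yields either $r(x_1)<r(y_1)<r(x_2)$ or $r(y_1)<r(x_1)<r(y_2)$. In the first case, since $r(x_1)+r(x_3)$ and $r(x_2)+r(x_3)$ both lie in band $j$, the intermediate value $r(y_1)+r(x_3)$ is trapped strictly between them and hence also lands in band $j$ --- but $x_3y_1$ is a non-edge, contradiction. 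The second case is symmetric, using $y_3$ and band $\ell$. Note that this argument is insensitive to whether $j=i$, $\ell=i$, or $j=\ell$, which disposes of the degenerate cases you were worried about without any extra work, and it does not invoke Lemma~\ref{ijl}.
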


Let $q_m=m+{m\choose 3}+1$. For $q_{m-1}\le n_3\le q_m-1$, it is easy to check that we need at least $m$ colors to color every edge in $n_3 K_3$, to make sure that we have no conflict with Lemma \ref{ijl} and Lemma \ref{ijjill}. And thus we need at least $2m-1$ thresholds to provide enough colors.

\begin{lemma}[Kittipassorn and Sumalroj \cite{KS}]
    $\Theta(n_3 K_3)\ge 2m-1$. 
\end{lemma}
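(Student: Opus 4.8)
The plan is to convert a threshold representation into the edge-coloring described above and then bound $n_3$ by a purely combinatorial count. Suppose $n_3 K_3$ has a $(\theta_1,\dots,\theta_k)$-representation, and write $c=\lceil k/2\rceil$ for the number of available colors. First I would verify that the coloring rule colors \emph{every} edge of $n_3 K_3$ with exactly one color in $\{1,\dots,c\}$: a pair $xy$ is an edge iff $r(x)+r(y)\ge\theta_i$ for an odd number of $i$, which happens iff $r(x)+r(y)$ lies in one of the half-open intervals $[\theta_1,\theta_2),[\theta_3,\theta_4),\dots$ (or in $[\theta_{2c-1},\infty)$ when $k$ is odd), since a rank-sum in $[\theta_{2i-1},\theta_{2i})$ has exactly $\theta_1,\dots,\theta_{2i-1}$ below it, an odd number; these intervals are pairwise disjoint, so the color is well defined. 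Hence each of the $n_3$ triangles receives a well-defined type $ij\ell$.

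Next I would split the triangles into \emph{rainbow} ones, whose three edge-colors are distinct, and \emph{non-rainbow} ones. By Lemma \ref{ijl}, each unordered triple $\{i,j,\ell\}$ of distinct colors is the type of at most one triangle, so there are at most ${c\choose 3}$ rainbow triangles. For the non-rainbow triangles I would assign to each one its \emph{minority color}: for a type $iij$ with $i\neq j$ this is $j$ (the color appearing on a single edge), and a monochromatic triangle $iii$ is declared to have minority color $i$. The non-rainbow triangles then fall into $c$ classes, one per color. Within the class of minority color $i$ the possible types are $iii$ and $jji$ over $j\neq i$; Lemma \ref{ijjill} (used with $\ell=i$ to forbid $iii$ together with any $jji$, and with two distinct majority colors $j\neq j'$ to forbid $jji$ together with $j'j'i$) shows that at most one type occurs in that class. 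Thus there are at most $c$ non-rainbow triangles, and in total
\begin{align*}
    n_3 \le c + {c\choose 3} = q_c - 1 .
\end{align*}

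Finally, since ${m\choose 3}-{m-1\choose 3}={m-1\choose 2}\ge 0$, we have $q_m-q_{m-1}=1+{m-1\choose 2}\ge 1$, so $(q_m)$ is strictly increasing. From $q_{m-1}\le n_3\le q_c-1<q_c$ it follows that $c\ge m$, i.e.\ $\lceil k/2\rceil\ge m$, which forces $k\ge 2m-1$. As the representation was arbitrary, $\Theta(n_3 K_3)\ge 2m-1$.

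The one place that needs care is the count of non-rainbow triangles: one must make sure Lemma \ref{ijjill} is invoked correctly both in the case of two different majority colors sharing a minority color and in the degenerate case where a monochromatic triangle plays the role of $i\ell\ell$ with $\ell=i$, so that each of the $c$ minority-color classes really contributes at most one triangle. Everything else is routine bookkeeping with the identity $q_m=m+{m\choose 3}+1$.
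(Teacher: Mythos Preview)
Your argument is correct and is exactly the counting argument the paper has in mind: it states only that ``it is easy to check that we need at least $m$ colors'' from Lemmas~\ref{ijl} and~\ref{ijjill}, and you have supplied precisely those details, bounding the rainbow triangles by ${c\choose 3}$ via Lemma~\ref{ijl} and the non-rainbow ones by $c$ via Lemma~\ref{ijjill} (including the degenerate case $\ell=i$), then concluding $c=\lceil k/2\rceil\ge m$. There is nothing to correct.
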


To get an upper bound:

A rank assignment of $n_3 K_3$ is called an \emph{$\{a_1,a_2,...,a_m\}$-assignment} if every $K_3$-cluster has edge rank sums in the form $a_i,a_i,a_i$ for $i\in [m]$ or $a_i,a_j,a_\ell$ for distinct $i,j,\ell\in[m]$, and no two $K_3$-clusters have the same multiset of edge rank sums. A $K_3$-cluster is called $K_3(a_i,a_j,a_\ell)$ if its edge rank sums are $a_i$, $a_j$, and $a_\ell$. Note that a $K_3$-cluster is $K_3(a_i,a_i,a_i)$ if and only if all its three vertices have rank $\frac{a_i}{2}$; and a $K_3$-cluster is $K_3(a_i, a_j, a_\ell)$ if and only if its three vertices have ranks $\frac{a_i+a_j-a_\ell}{2}$, $\frac{a_i+a_\ell-a_j}{2}$, and $\frac{a_j+a_\ell-a_i}{2}$.

For $q_{m-1}\le n_3\le q_m-1$, indeed we have an $\{a_1,a_2,...,a_m\}$-assignment of $n_3 K_3$, because we have $m$ available $K_3$-clusters in the form $K_3(a_i,a_i,a_i)$ and ${m\choose 3}$ available $K_3$-clusters in the form $K_3(a_i, a_j, a_\ell)$, and $n_3\le q_m-1=m+{m\choose 3}$.

Kittipassorn and Sumalroj showed that we can properly choose values for $a_1, a_2, ..., a_m$, such that in an $\{a_1, a_2, ..., a_m\}$-assignment of $n_3 K_3$, every nonedge rank sum is not equal to any edge rank sum.

\begin{lemma}[Kittipassorn and Sumalroj \cite{KS}]
    Let $\{a_1, a_2, ..., a_m\}\subset \R^+$ be a linearly independent set over $\Q$. In an $\{a_1, a_2, ..., a_m\}$-assignment of $n_3 K_3$, the edge rank sums and the nonedge rank sums do not coincide.
\end{lemma}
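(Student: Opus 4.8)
The plan is to use the linear independence of $\{a_1,\dots,a_m\}$ over $\Q$ to separate, at the level of formal $\Q$-linear combinations, every possible nonedge rank sum from every possible edge rank sum. First I would enumerate the possible vertex ranks occurring in an $\{a_1,\dots,a_m\}$-assignment: a vertex either sits in a cluster $K_3(a_i,a_i,a_i)$ and then has rank $\tfrac{a_i}{2}$, or it sits in a cluster $K_3(a_i,a_j,a_\ell)$ with distinct $i,j,\ell$ and then has rank of the form $\tfrac{a_i+a_j-a_\ell}{2}$ (in some assignment of the three roles). In all cases a vertex rank is $\tfrac12(\varepsilon_1 a_1+\dots+\varepsilon_m a_m)$ with each $\varepsilon_t\in\{-1,0,1\}$, and moreover exactly one, or exactly three, of the $\varepsilon_t$ are nonzero; when three are nonzero, either all three are $+1$ (impossible, since that would be a repeated index) — so in the three-nonzero case the signs are one $-1$ and two $+1$, or, wait: the rank $\tfrac{a_j+a_\ell-a_i}{2}$ has two $+1$'s and one $-1$, the rank $\tfrac{a_i+a_j-a_\ell}{2}$ likewise, so the multiset of signs on the three nonzero coordinates is always $\{+1,+1,-1\}$. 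Consequently every edge rank sum $r(x)+r(y)$, being by construction one of the $a_i$'s, is a $\{-1,0,1\}$-combination with exactly one nonzero coefficient equal to $+1$.

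Next I would compute the shape of a nonedge rank sum. A nonedge joins two vertices $x,y$ lying in different clusters, so $r(x)+r(y)=\tfrac12\big(\sum_t \varepsilon_t a_t + \sum_t \varepsilon'_t a_t\big)=\tfrac12\sum_t(\varepsilon_t+\varepsilon'_t)a_t$ where $(\varepsilon_t)$ and $(\varepsilon'_t)$ each have one or three nonzero entries with the sign patterns described above. The key combinatorial point is that such a nonedge rank sum can never collapse to $a_i=1\cdot a_i$ (one nonzero coefficient, equal to $1$). I would argue this by a short case analysis on how many of the two vertices are of "singleton-support" type ($K_3(a_i,a_i,a_i)$) versus "triple-support" type: if $x$ has support $\{i\}$ with coefficient $+1$ and $y$ has support $\{i\}$ with coefficient $+1$, then $x,y$ both have rank $\tfrac{a_i}{2}$, but then $x,y$ lie in distinct $K_3(a_i,a_i,a_i)$-clusters, contradicting the no-repetition requirement of an $\{a_1,\dots,a_m\}$-assignment; in every other configuration, after adding the two sign vectors and halving, either the number of nonzero coordinates is not $1$, or some nonzero coordinate is $2$ or $-1$ or $\tfrac12$, in particular the combination is not of the form $1\cdot a_i$. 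Since $\{a_1,\dots,a_m\}$ is $\Q$-linearly independent, two $\Q$-linear combinations of the $a_t$ are equal only if their coefficient vectors coincide; hence no nonedge rank sum equals any $a_i$, i.e. no nonedge rank sum equals any edge rank sum.

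The step I expect to be the main obstacle is the case analysis verifying that a sum of two sign vectors of the permitted type, after halving, is never the standard unit vector $e_i$: one must be careful about the mixed case where one vertex has triple support $\{i,j,\ell\}$ (signs $+,+,-$ in some order) and the other has singleton support, and about the case where both have triple support with overlapping index sets, so that cancellation can occur. The cleanest way to handle it is to first observe that $\tfrac12\sum_t(\varepsilon_t+\varepsilon'_t)a_t=a_i$ forces $\varepsilon_t+\varepsilon'_t=0$ for $t\neq i$ and $\varepsilon_i+\varepsilon'_i=2$, the latter forcing $\varepsilon_i=\varepsilon'_i=1$; then $\varepsilon_t+\varepsilon'_t=0$ with $\varepsilon_t,\varepsilon'_t\in\{-1,0,1\}$ forces $\varepsilon'_t=-\varepsilon_t$ for every $t\neq i$. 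Since the nonzero entries of $(\varepsilon_t)$ and of $(\varepsilon'_t)$ both number one or three, and their restrictions off coordinate $i$ are negatives of each other, a quick parity/count check (using that on their triple supports the sign multiset is $\{+,+,-\}$, which is not symmetric under negation) rules out every possibility except $x,y$ both being $K_3(a_i,a_i,a_i)$-vertices — which is excluded by the definition of an $\{a_1,\dots,a_m\}$-assignment. This completes the argument, and I would then remark that the same reasoning applies verbatim — with "edge" and "nonedge" interchanged — to $K_{n\times 3}$, as noted in the paper.
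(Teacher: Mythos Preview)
The paper does not give its own proof of this lemma (it is quoted as a result of \cite{KS} in the review subsection), so there is no paper argument to compare against directly. Your overall strategy---encode each vertex rank as a half-integer combination $\tfrac12\sum_t\varepsilon_t a_t$ with $\varepsilon_t\in\{-1,0,1\}$ and invoke $\Q$-linear independence---is exactly the right one, and your reduction to $\varepsilon_i=\varepsilon'_i=1$ together with $\varepsilon'_t=-\varepsilon_t$ for $t\neq i$ is correct. However, the final step of your case analysis has a genuine gap.

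You claim that the asymmetry of the sign multiset $\{+,+,-\}$ under negation rules out the triple--triple case. It does not: the operation in play is not full negation but negation \emph{off coordinate $i$}. If $\varepsilon$ has support $\{i,j,k\}$ with signs $(+1,+1,-1)$ at $(i,j,k)$, then negating off coordinate $i$ yields $\varepsilon'$ with signs $(+1,-1,+1)$ at $(i,j,k)$, whose sign multiset is again $\{+,+,-\}$. This is a perfectly legitimate vertex type, so the triple--triple configuration is \emph{not} excluded by your sign-pattern check.

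The fix is to reuse the same idea you applied in the singleton--singleton case. By $\Q$-linear independence, a vertex with rank $\tfrac12(a_i+a_j-a_k)$ (in any ordering of the signs) can only occur in a cluster whose edge-rank-sum multiset is $\{a_i,a_j,a_k\}$, and by the definition of an $\{a_1,\dots,a_m\}$-assignment there is at most one such cluster. Hence in the surviving triple--triple configuration, $x$ and $y$ are two distinct vertices of the \emph{same} cluster $K_3(a_i,a_j,a_k)$, so $xy$ is an edge, contradicting the nonedge hypothesis. With this correction the argument is complete.
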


For linear independence, for example, we can choose $a_1, a_2, ..., a_m$ to be the square roots of the first $m$ prime numbers.

We let such $a_1, a_2, ..., a_m$ with $a_1<a_2<...<a_m$ be the edge rank sums, and now we have the set of edge rank sums being disjoint from the set of nonedge rank sums. This means the distance between any edge rank sum and any nonedge rank sum is positive. Also we know the distance between any two edge rank sums is positive, as they have been chosen to be distinct numbers. Note that two nonedge rank sums could be the same, they could have distance 0, but this does not affect our following steps.

Let $\Delta$ be the smallest distance between an edge rank sum and another rank sum (edge or nonedge). We have showed $\Delta>0$. Then we choose $0<\epsilon<\Delta$, and let $a_i$ and $a_i+\epsilon$ be the thresholds for every $i\in [m]$. Now every edge rank sum $a_i$ is greater than or equal to an odd number of thresholds, which are $a_1, a_1+\epsilon, a_2, a_2+\epsilon, ..., a_i$. Every nonedge rank sum is greater than or equal to an even number of thresholds, which are $a_1, a_1+\epsilon, a_2, a_2+\epsilon, ..., a_j, a_j+\epsilon$ for some $j\in [m]$. So we have an $(a_1, a_1+\epsilon, a_2, a_2+\epsilon, ..., a_m, a_m+\epsilon)$-representation of $n_3 K_3$. Thus $2m$ thresholds are enough.

\begin{lemma}[Kittipassorn and Sumalroj \cite{KS}]
    $\Theta(n_3 K_3)\le 2m$.
\end{lemma}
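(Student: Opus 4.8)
The plan is to prove the bound by an explicit construction, exactly along the lines of the review above: build a rank assignment of $n_3K_3$ whose edge rank sums take only $m$ distinct positive values, arrange that the nonedge rank sums avoid those values, and then convert this into a $2m$-threshold representation by a small perturbation of the thresholds. Concretely, assume $q_{m-1}\le n_3\le q_m-1=m+{m\choose 3}$. Relative to any $m$-element value set $\{a_1,\dots,a_m\}$ there are at least $n_3$ available ``types'' of $K_3$-cluster: the $m$ types $K_3(a_i,a_i,a_i)$, realized by putting all three vertices of a cluster at rank $a_i/2$, and the ${m\choose 3}$ types $K_3(a_i,a_j,a_\ell)$ with $i<j<\ell$, realized by the ranks $(a_i+a_j-a_\ell)/2$, $(a_i+a_\ell-a_j)/2$, $(a_j+a_\ell-a_i)/2$; assigning the $n_3$ clusters distinct types gives an $\{a_1,\dots,a_m\}$-assignment. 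I would take $a_1<\dots<a_m$ to be positive reals linearly independent over $\Q$ --- for concreteness the square roots of the first $m$ primes --- so that the linear-independence lemma of Kittipassorn and Sumalroj applies and no nonedge rank sum equals any edge rank sum. Thus the finite set of edge rank sums $\{a_1,\dots,a_m\}$ is disjoint from the set of nonedge rank sums.

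Next comes the perturbation. Let $\Delta>0$ be the minimum of $|a_i-s|$ over all $i\in[m]$ and all rank sums $s\neq a_i$ (edge or nonedge); this is positive since the $a_i$ are distinct and the edge and nonedge sum sets are disjoint. Pick $0<\epsilon<\Delta$ and take the $2m$ thresholds to be $a_1,a_1+\epsilon,a_2,a_2+\epsilon,\dots,a_m,a_m+\epsilon$. I would then verify the parity condition. An edge rank sum $a_i$ is $\ge$ exactly the thresholds $a_1,a_1+\epsilon,\dots,a_{i-1},a_{i-1}+\epsilon,a_i$, which is $2i-1$ of them (odd): since $\epsilon<\Delta\le a_i-a_j$ for $j<i$, each such $a_j+\epsilon$ lies below $a_i$, while $a_i+\epsilon$ and all larger thresholds lie above $a_i$. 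A nonedge rank sum $s$ falls strictly in one of the open intervals cut out by $a_1<\dots<a_m$ (the cases $s<a_1$ and $s>a_m$ being the extreme ones); if $j$ is the number of $a_i$ below $s$, then $s$ is $\ge$ exactly $a_1,a_1+\epsilon,\dots,a_j,a_j+\epsilon$, which is $2j$ of them (even), because $\epsilon<\Delta\le|s-a_i|$ keeps every $a_i+\epsilon$ on the same side of $s$ as $a_i$. Hence this is an $(a_1,a_1+\epsilon,\dots,a_m,a_m+\epsilon)$-representation of $n_3K_3$, so $\Theta(n_3K_3)\le 2m$.

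The genuine content lies in the two quoted ingredients --- that the counting $n_3\le m+{m\choose 3}$ exactly matches the supply of cluster types, so an $\{a_1,\dots,a_m\}$-assignment exists at all, and the linear-independence lemma separating edge from nonedge rank sums. Once those are granted, the only point needing care is the choice $\epsilon<\Delta$: this is precisely what makes inserting the companion threshold $a_i+\epsilon$ just above $a_i$ flip the parity when a nonedge sum crosses the block of thresholds near $a_i$ but leave the parity odd when an edge sum sits exactly at $a_i$. The remaining verification is routine bookkeeping, and the same argument applied to nonedges instead of edges handles $K_{n\times 3}$, i.e. part (I).(ii).
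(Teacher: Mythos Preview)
Your proposal is correct and follows essentially the same argument as the paper's review of Kittipassorn and Sumalroj: construct an $\{a_1,\dots,a_m\}$-assignment using the $m+{m\choose 3}$ available cluster types, invoke the linear-independence lemma to separate edge from nonedge rank sums, take $\Delta$ to be the minimum such separation, and place thresholds $a_i,a_i+\epsilon$ with $0<\epsilon<\Delta$ so that edge sums land on an odd count and nonedge sums on an even count. The parity verification you spell out matches the paper's reasoning line for line.
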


Let us recall their conclusions here.

    \[ \Theta(n_3 K_3)=\begin{cases} 
          2m-1 & if\ n_3=q_{m-1}, \\
          2m & if\ q_{m-1}+1\le n_3\le q_m-1.
       \end{cases}
    \]

For $q_{m-1}\le n_3\le q_m-1$, we already know the bounds $2m-1\le \Theta(n_3 K_3)\le 2m$. The exact threshold numbers of $n_3 K_3$ would be determined by using the following two results.

\begin{lemma}[Kittipassorn and Sumalroj \cite{KS}] \label{lemmaks1}
    If $n_3=q_{m-1}$, then $\Theta(n_3 K_3)\le 2m-1$.
\end{lemma}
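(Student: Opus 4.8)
The plan is to show that when $n_3 = q_{m-1} = (m-1) + \binom{m-1}{3} + 1$, we can represent $n_3 K_3$ using only $2m-1$ thresholds, improving on the generic $2m$-threshold construction from the review. The key observation is that $q_{m-1} = \binom{m-1}{3} + m$, so the $\{a_1,\dots,a_m\}$-assignment of $n_3 K_3$ has room for exactly $\binom{m-1}{3}$ clusters of mixed type $K_3(a_i,a_j,a_\ell)$ and $m$ clusters of repeated type $K_3(a_i,a_i,a_i)$; but more importantly, we have one extra color to play with relative to the count $\binom{m}{3}+m$ that fills all $m$ colors. I would exploit this slack by making the largest threshold, corresponding to color $m$, do double duty: choose the edge rank sums so that the clusters using $a_m$ are arranged to need only the single threshold $a_m$ (sitting in the "tail" interval $[\theta_{2m-1},\infty)$) rather than a bracketing pair $a_m, a_m+\epsilon$. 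Concretely, I would keep the thresholds $a_1, a_1+\epsilon, \dots, a_{m-1}, a_{m-1}+\epsilon$ as before (that is $2m-2$ of them), and add a single final threshold $a_m$, for a total of $2m-1$.

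First I would re-examine which rank sums need to land above $a_m$: the edge rank sums are $a_1, \dots, a_m$, and every edge with rank sum $a_m$ must be "on" (an odd number of thresholds below it), while every other rank sum, edge or nonedge, that happens to be $\ge a_m$ must be "off" (an even number below it). Since $a_1,\dots,a_{m-1},a_{m-1}+\epsilon$ are all below $a_m$ (choosing $\epsilon$ small), an edge rank sum $a_m$ lies above the $2m-2$ earlier thresholds plus the threshold $a_m$ itself, giving $2m-1$, which is odd — good. The danger is a nonedge rank sum lying in the half-line $[a_m, \infty)$: it would then also be above all $2m-1$ thresholds, hence "on", which is wrong. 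So the crux is to choose the values $a_1 < a_2 < \dots < a_m$ (still linearly independent over $\Q$, so that edge and nonedge rank sums never coincide by the Kittipassorn–Sumalroj lemma) such that \emph{no} nonedge rank sum is $\ge a_m$; equivalently, $a_m$ should be strictly larger than every nonedge rank sum.

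The key steps in order: (1) set up the $\{a_1,\dots,a_m\}$-assignment of $q_{m-1} K_3$ using the $m$ clusters $K_3(a_i,a_i,a_i)$ and all $\binom{m-1}{3}$ clusters $K_3(a_i,a_j,a_\ell)$ with $i,j,\ell \in [m-1]$ — note we deliberately do \emph{not} use any mixed cluster involving index $m$, so color $m$ appears only in the three monochromatic-type edges of $K_3(a_m,a_m,a_m)$, each a single edge of rank sum $a_m$ lying in the tail; (2) observe that every vertex rank is then a half-integer combination $\tfrac{1}{2}(\pm a_i \pm a_j \pm a_\ell)$ with all indices in $[m-1]$, \emph{except} the three vertices of the $K_3(a_m,a_m,a_m)$ cluster which have rank $\tfrac{a_m}{2}$; (3) choose $a_m$ huge relative to $a_1,\dots,a_{m-1}$ — e.g. $a_m > 100\sum_{i<m} a_i$ — while keeping $\{a_1,\dots,a_m\}$ $\Q$-linearly independent (pick $a_1,\dots,a_{m-1}$ as square roots of small primes, then $a_m$ as the square root of a sufficiently large prime); (4) check that every nonedge rank sum is then $< a_m$: a nonedge either joins two vertices from index-$[m-1]$ clusters (rank sum bounded by $\tfrac{3}{2}\sum_{i<m}a_i \cdot 2 < a_m$), or joins a $\tfrac{a_m}{2}$-vertex to an index-$[m-1]$ vertex (rank sum $\tfrac{a_m}{2} + \tfrac{1}{2}(\pm a_i \pm a_j \pm a_\ell) < a_m$ provided $a_m$ dominates), or is the (at most one) nonedge between two of the three $\tfrac{a_m}{2}$-vertices — but those three vertices form a clique $K_3(a_m,a_m,a_m)$, so there is no such nonedge; (5) pick $\epsilon$ smaller than the minimum positive gap between any edge rank sum among $a_1,\dots,a_{m-1}$ and any other rank sum (edge or nonedge) lying below $a_m$, exactly as in the review, and declare the thresholds to be $a_1,a_1+\epsilon,\dots,a_{m-1},a_{m-1}+\epsilon,a_m$; (6) verify the parity count: edges with rank sum $a_i$ for $i<m$ sit above $2i-1$ thresholds (odd); edges with rank sum $a_m$ sit above all $2m-1$ thresholds (odd); nonedges sit above an even number, either $2j$ for some $j \le m-1$ or none, since no nonedge reaches $a_m$.

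I expect the main obstacle to be step (4) combined with the linear-independence requirement of step (3): we need $a_m$ simultaneously enormous (to dominate all nonedge rank sums) and $\Q$-linearly independent from $a_1,\dots,a_{m-1}$ (to invoke the coincidence-avoidance lemma), and we must double-check that pushing all mixed clusters into index range $[m-1]$ still yields enough clusters, i.e. that $\binom{m-1}{3} + m \ge q_{m-1}$, which holds with equality by the definition of $q_{m-1}$ — so there is \emph{no} slack, and every one of the $m$ monochromatic clusters and every one of the $\binom{m-1}{3}$ mixed $[m-1]$-clusters must be used. That tightness is the delicate point: if the arithmetic of $q_{m-1}$ did not exactly match $\binom{m-1}{3}+m$ the argument would collapse, so the proof really hinges on the identity $q_{m-1} = (m-1) + \binom{m-1}{3} + 1 = \binom{m-1}{3} + m$.
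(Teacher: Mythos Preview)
Your proposal is correct and follows essentially the same approach as the paper's proof: use the $m$ monochromatic clusters $K_3(a_i,a_i,a_i)$ together with all $\binom{m-1}{3}$ mixed clusters $K_3(a_i,a_j,a_\ell)$ with $i,j,\ell\in[m-1]$, make $a_m$ large relative to $a_1,\dots,a_{m-1}$ so that no nonedge rank sum reaches $a_m$, and thereby drop the threshold $a_m+\epsilon$. The only cosmetic difference is that the paper states the size condition as $a_{m-1}\le \tfrac{a_m}{2}$ (which is exactly enough to force every vertex rank to be at most $\tfrac{a_m}{2}$, hence every rank sum at most $a_m$), whereas you take the safely excessive $a_m>100\sum_{i<m}a_i$; both work.
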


\begin{lemma}[Kittipassorn and Sumalroj \cite{KS}] \label{lemmaks2}
    If $q_{m-1}+1\le n_3\le q_m-1$, then $\Theta(n_3 K_3)\ge 2m$.
\end{lemma}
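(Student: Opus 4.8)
\textbf{Proof proposal for Lemma \ref{lemmaks2}.}

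The plan is to refine the coloring/counting argument that established $\Theta(n_3K_3)\ge 2m-1$ and show that the borderline count $2m-1$ thresholds is actually impossible once $n_3\ge q_{m-1}+1$, forcing $\Theta(n_3K_3)\ge 2m$. Suppose for contradiction that $n_3K_3$ has a $(\theta_1,\dots,\theta_{2m-1})$-representation, i.e. $k=2m-1$ thresholds, which gives exactly $m$ colors $1,2,\dots,m$ in the sense of the review (color $m$ being the ``open top'' interval $[\theta_{2m-1},\infty)$ since $k$ is odd). Each $K_3$-cluster receives an unordered color triple $ij\ell$ drawn from $\{1,\dots,m\}$, and by Lemma \ref{ijl} no triple repeats, while by Lemma \ref{ijjill} for each pair $i\ne j$ at most one of the ``two-of-a-kind'' triples $iij$ and $ijj$ can occur; more precisely Lemma \ref{ijjill} says $ijj$ and $i\ell\ell$ cannot coexist, so for each fixed $i$, among all triples of the shape $i\ast\ast$ with a repeated non-$i$ color, only one value of the repeated color is allowed. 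I would first carefully recount the maximum number of distinct admissible color triples under exactly these two constraints and check that it equals $q_m-1 = m+\binom{m}{3}$ is \emph{not} attainable with only $m$ colors in the borderline regime — the key arithmetic point being that $q_{m-1} = (m-1)+\binom{m-1}{3}+1$, and one must show the number of legal triples on $m$ colors is at most $q_{m-1}-1 < q_{m-1}+1 \le n_3$, which is the desired contradiction. (If the raw count of legal triples on $m$ colors already exceeds $n_3$, then the argument must instead extract additional forbidden configurations from the $k=2m-1$ structure; I expect the true obstruction is that color $m$, being a half-open interval reaching $+\infty$, is more constrained than an interior color.)

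The heart of the matter, then, is a more delicate structural fact about the \emph{top} color. Here is the approach I would take for that piece. Fix the representation with $k=2m-1$. The top color $m$ corresponds to edge rank sums lying in $[\theta_{2m-1},\infty)$, which is an unbounded interval closed on the left. I would argue that the clusters using color $m$ cannot be as numerous or as varied as clusters using an interior color, because an interior color $c$ corresponds to a \emph{bounded} interval $[\theta_{2c-1},\theta_{2c})$, and bounded intervals interact differently with the nonedge constraints. Concretely, within a single $K_3$-cluster with vertices of ranks $x,y,z$, the three edge sums are $x+y,\,y+z,\,x+z$; if $w$ is a vertex of a \emph{different} cluster, the three nonedge sums $x+w,y+w,z+w$ are each (up to the shift by $w$) a translate of the vertex ranks, and their relation to the thresholds is what powers Lemmas \ref{ijl} and \ref{ijjill}. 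I would push this translate bookkeeping one notch further to obtain a ``boundary deficiency'': the set of triples that can legitimately use color $m$ at least twice, or use color $m$ together with certain patterns, is strictly smaller than the naive count, reducing the total from $m+\binom{m}{3}$ down to at most $q_{m-1} = (m-1)+\binom{m-1}{3}+1$. Since $n_3 \ge q_{m-1}+1$ strictly exceeds this, we reach the contradiction and conclude $\Theta(n_3K_3)\ge 2m$.

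I would organize the write-up in three steps: (1) set up the coloring from a hypothetical $(2m-1)$-threshold representation and restate Lemmas \ref{ijl} and \ref{ijjill} in terms of it; (2) prove the boundary-deficiency claim — that with only $m$ colors, at most one of which is the unbounded top color, the number of pairwise-distinct admissible color triples is at most $q_{m-1}$; (3) compare with $n_3\ge q_{m-1}+1$ to get the contradiction, hence $\Theta(n_3K_3)\ge 2m$, and combine with the already-proved upper bound $\Theta(n_3K_3)\le 2m$ to finish. The main obstacle I anticipate is step (2): the two lemmas quoted in the review constrain only pairs of clusters, and a direct union bound over pairs seems to allow up to $m+\binom{m}{3}$ triples, so squeezing out the extra ``$-1$'' (passing from $q_m-1$ on $m$ colors down to $q_{m-1}$ effectively available ones) must come from a genuinely new use of the fact that one endpoint is $+\infty$ and that the representation uses an \emph{odd} number of thresholds — for instance, showing that a cluster colored $mmm$, or two clusters colored $m m i$ and $m m j$, force an inequality on the top thresholds that cannot be simultaneously satisfied. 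Getting that extra structural inequality exactly right, and checking the edge cases for small $m$ (where $\binom{m}{3}=0$ and the counting degenerates), is where the real work lies.
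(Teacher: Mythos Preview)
The paper does not actually prove this lemma: immediately after stating it, the text says ``The specific details of Lemma~\ref{lemmaks2} will not be used in this paper, readers may refer to \cite{KS} for more information.'' So there is no in-paper proof to compare your proposal against.

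That said, your proposal is not a proof but a plan with its central step missing. You correctly observe that Lemmas~\ref{ijl} and~\ref{ijjill} alone allow up to $m+\binom{m}{3}=q_m-1$ admissible color triples on $m$ colors, and since $n_3\le q_m-1$, this counting yields only $\Theta(n_3K_3)\ge 2m-1$, not $2m$. Your proposed remedy --- a ``boundary deficiency'' coming from the fact that the top color corresponds to the unbounded interval $[\theta_{2m-1},\infty)$ --- is stated as a hope, not a lemma: you write that ``getting that extra structural inequality exactly right\dots is where the real work lies,'' and you do not supply it. In particular, you give no argument that the unboundedness of the top interval actually reduces the admissible triple count from $q_m-1$ down to $q_{m-1}$, nor do you produce any concrete forbidden configuration beyond the two cited lemmas. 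Without that, steps~(2) and~(3) of your outline are empty.

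A further caution: there is no a priori reason the extra constraint should come specifically from the top color being unbounded. The upper-bound construction for $n_3=q_{m-1}$ in Lemma~\ref{lemmaks1} avoids the threshold $a_m+\epsilon$ by ensuring no \emph{rank sum} exceeds $a_m$; the matching lower bound likely requires analyzing what happens when a cluster mixes color~$m$ with a smaller color and tracking the resulting nonedge sums against \emph{all} thresholds, not just invoking unboundedness abstractly. If you want to reconstruct the argument, you should consult \cite{KS} directly for the additional structural lemma that does the work, since the present paper deliberately omits it.
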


They proved Lemma \ref{lemmaks1} by choosing $\{a_1, a_2, ..., a_m\}\subset \R^+$ to be a linearly independent set over $\Q$ with $a_1<a_2<...<a_{m-1}\le \frac{a_m}{2}$, and letting the $q_{m-1}=(m-1)+{m-1\choose 3}+1$ $K_3$-clusters be $K_3(a_i,a_i,a_i)$ with $i\in [m]$ and $K_3(a_i, a_j, a_\ell)$ with distinct $i, j, \ell\in [m-1]$. In this way, no rank sum exceeds $a_m$, so the largest threshold $a_m+\epsilon$ is not needed, $2m-1$ thresholds are enough. The specific details of Lemma \ref{lemmaks2} will not be used in this paper, readers may refer to \cite{KS} for more information.

Now we have finished reviewing the structure of the proof of Theorem \ref{ks3}.

\subsection{When $n_3$ is larger}

We are going to get $n_1 K_1\cup n_2 K_2$ involved, and prove (II).(ii). The proof of (I).(ii) is very similar, so it is omitted here.

Let $q_m=m+{m\choose 3}+1$. Firstly, for $q_{m-1}\le n_3\le q_m-1$, we show that we can find an $\{a_1, a_2, ..., a_m\}$-assignment of $n_3 K_3$, where we have all $K_3(a_i,a_i,a_i)$'s.

\begin{lemma} \label{lemmafinding}
    If $q_{m-1}\le n_3\le q_m-1$, then for any $\{a_1,a_2,...,a_m\}\subset \R^+$ being a linearly independent set over $\Q$ with $a_1<a_2<...<a_m$, there is an $\{a_1, a_2, ..., a_m\}$-assignment of $n_3 K_3$ such that for every $i\in [m]$, $K_3(a_i,a_i,a_i)$ appears.
\end{lemma}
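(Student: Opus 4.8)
The plan is to prove this by a direct counting argument, building on the bookkeeping already recorded in the review. Recall from there that the ``available'' cluster types split into two families: the $m$ monochromatic types $K_3(a_i,a_i,a_i)$ for $i\in[m]$, and the $\binom{m}{3}$ rainbow types $K_3(a_i,a_j,a_\ell)$ for $i<j<\ell$; and that prescribing a type for a cluster is the same as prescribing the ranks of its three vertices, namely all three equal to $\frac{a_i}{2}$ in the monochromatic case, and equal to $\frac{a_i+a_j-a_\ell}{2}$, $\frac{a_i+a_\ell-a_j}{2}$, $\frac{a_j+a_\ell-a_i}{2}$ in the rainbow case.

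First I would check that there is room for all $m$ monochromatic types. Since $q_{m-1}=(m-1)+\binom{m-1}{3}+1=m+\binom{m-1}{3}\ge m$, the hypothesis $n_3\ge q_{m-1}$ gives $n_3\ge m$. Second I would check that the remaining clusters can be filled with distinct rainbow types: $n_3-m\le(q_m-1)-m=\binom{m}{3}$, which is exactly the number of rainbow types. Hence one may select a set of $n_3$ pairwise distinct types consisting of all $m$ monochromatic ones together with any $n_3-m$ of the rainbow ones.

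Then I would fix a bijection between the $n_3$ clusters of $n_3 K_3$ and this chosen set of $n_3$ types, and give the vertices of each cluster the ranks dictated by its assigned type. A one-line computation shows each cluster then has its three edge rank sums equal to its assigned type (if the vertices have ranks $p,q,s$ the edge sums are $p+q$, $p+s$, $q+s$, and the formulas above were chosen precisely so these form the prescribed multiset), and distinct clusters receive distinct multisets of edge rank sums by construction. So the resulting rank assignment is an $\{a_1,\dots,a_m\}$-assignment of $n_3 K_3$, and by design $K_3(a_i,a_i,a_i)$ appears for every $i\in[m]$.

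I do not expect a genuine obstacle here. The only points needing any care are the two cardinality inequalities $q_{m-1}\ge m$ and $n_3-m\le\binom{m}{3}$ (the first also handling the degenerate small ranges $m\le 3$, where some or all rainbow types are absent), together with the routine verification that the prescribed vertex ranks realize the intended edge rank sums, which has effectively already been done in the review. The point of the lemma is not its difficulty but the extra structural guarantee it provides --- that every monochromatic type occurs --- which is what will let us splice in ranks for $n_1 K_1\cup n_2 K_2$ when proving (II).(ii).
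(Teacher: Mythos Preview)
Your proof is correct and follows essentially the same counting argument as the paper: include all $m$ monochromatic types and fill the remaining $n_3-m\le\binom{m}{3}$ slots with distinct rainbow types. The only cosmetic difference is that the paper presents it by starting from the full $\{a_1,\dots,a_m\}$-assignment of $(q_m-1)K_3$ and deleting rainbow clusters, and treats $n_3=q_{m-1}$ separately by choosing the rainbow types to have all indices in $[m-1]$; that extra specificity is not needed for the lemma as stated, but it is invoked later in the proof of (II).(ii) when showing no rank sum exceeds $a_m$, so keep it in mind there.
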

\begin{proof}
    If $q_{m-1}+1\le n_3\le q_m-1$, then we first follow the idea we just reviewed to find the $\{a_1, a_2, ..., a_m\}$-assignment of $(q_m-1)K_3$. Note that for chosen $a_1, a_2, ..., a_m$, there is only one $\{a_1, a_2, ..., a_m\}$-assignment of $(q_m-1)K_3$, which consists of every $K_3(a_i,a_i,a_i)$ with $i\in[m]$ and every $K_3(a_i,a_j,a_\ell)$ with distinct $i,j,\ell\in [m]$. Then we delete $q_m-1-n_3$ $K_3$-clusters to get an $\{a_1, a_2, ..., a_m\}$-assignment of $n_3 K_3$. We have $q_m-1-n_3\le q_m-1-(q_{m-1}+1)={m\choose 3}-{m-1\choose 3}-1<{m\choose 3}$. So we can randomly delete $q_m-1-n_3$ $K_3$-clusters from those ${m\choose 3}$ $K_3$-clusters with three different edge rank sums, and have all $K_3(a_i,a_i,a_i)$'s reserved.

    If $n_3=q_{m-1}$, the same as in Lemma \ref{lemmaks1}, we choose the $q_{m-1}=(m-1)+{m-1\choose 3}+1$ $K_3$-clusters to be $K_3(a_i,a_i,a_i)$ with $i\in [m]$ and $K_3(a_i, a_j, a_\ell)$ with distinct $i, j, \ell\in [m-1]$, where we have all $K_3(a_i,a_i,a_i)$'s.
\end{proof}

Now we can do the proof of (II).(ii).

\begin{proof}[Proof of (II).(ii)]
We prove: For $n_1+n_2+n_3\ge 1$, if $n_3\ge 2$, then
    \[ \Theta(n_1 K_1\cup n_2 K_2\cup n_3 K_3)=\begin{cases} 
          2m-1 & if\ n_3=q_{m-1}, \\
          2m & if\ q_{m-1}+1\le n_3\le q_m-1,
       \end{cases}
    \]
where $q_m=m+{m\choose 3}+1$.

Firstly, we have $m\ge 2$ because $n_3\ge 2=q_1$. Also, we may assume $n_1+n_2\ge 1$, because the case $n_1+n_2=0$ is proved by Theorem \ref{ks3}.

We know that $n_3 K_3$ is an induced subgraph of $n_1 K_1\cup n_2 K_2\cup n_3 K_3$, so by Proposition \ref{prop} and Theorem \ref{ks3}, we have 

\begin{lemma} \label{lemmalowerbound}
    \[ \Theta(n_1 K_1\cup n_2 K_2\cup n_3 K_3)\ge \begin{cases} 
          2m-1 & if\ n_3=q_{m-1}, \\
          2m & if\ q_{m-1}+1\le n_3\le q_m-1.
       \end{cases}
    \]
\end{lemma}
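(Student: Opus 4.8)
The plan is to obtain this lower bound for free from the monotonicity of the threshold number under induced subgraphs (Proposition \ref{prop}), together with Kittipassorn and Sumalroj's exact determination of $\Theta(n_3 K_3)$ in Theorem \ref{ks3}. No new construction is required; the point is simply that $n_1 K_1\cup n_2 K_2\cup n_3 K_3$ contains a copy of $n_3 K_3$ as an induced subgraph, so its threshold number cannot be smaller.

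Concretely, the steps I would carry out are as follows. First, take the set $W$ of the $3n_3$ vertices lying in the $n_3$ clusters of size $3$. Inside each such cluster all three edges are present, and there are no edges between vertices of distinct clusters, so the subgraph induced on $W$ has exactly the edges of $n_3$ disjoint triangles and nothing more; that is, it is precisely $n_3 K_3$. Applying Proposition \ref{prop} to this induced subgraph gives $\Theta(n_3 K_3)\le \Theta(n_1 K_1\cup n_2 K_2\cup n_3 K_3)$. Then I invoke Theorem \ref{ks3}(II): for the integer $m$ with $q_{m-1}\le n_3\le q_m-1$ we have $\Theta(n_3 K_3)=2m-1$ when $n_3=q_{m-1}$ and $\Theta(n_3 K_3)=2m$ when $q_{m-1}+1\le n_3\le q_m-1$. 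Chaining these two facts yields
\begin{align*}
    \Theta(n_1 K_1\cup n_2 K_2\cup n_3 K_3)\ \ge\ \Theta(n_3 K_3)=\begin{cases}
          2m-1 & if\ n_3=q_{m-1}, \\
          2m & if\ q_{m-1}+1\le n_3\le q_m-1,
       \end{cases}
\end{align*}
which is exactly the assertion of the lemma.

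There is essentially no obstacle in this argument: the only thing that genuinely needs checking is that the vertex set $W$ induces $n_3 K_3$ and not some larger graph, and this is immediate from the disjointness of the clusters. The real difficulty of this subsection lies not here but in the matching upper bound, where one must extend a carefully chosen $\{a_1,\dots,a_m\}$-assignment of $n_3 K_3$ (supplied by Lemma \ref{lemmafinding}) to the vertices of $n_1 K_1\cup n_2 K_2$ without creating any coincidence between an edge rank sum and a non-edge rank sum; the present lemma is only the easy half of the pair of bounds.
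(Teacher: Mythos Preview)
Your argument is correct and is exactly the paper's own proof: the paper states, just before Lemma \ref{lemmalowerbound}, that $n_3 K_3$ is an induced subgraph of $n_1 K_1\cup n_2 K_2\cup n_3 K_3$ and then invokes Proposition \ref{prop} together with Theorem \ref{ks3} to obtain the lower bound. Your additional remarks about the upper bound being the substantive half are also accurate.
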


Using Lemma \ref{lemmafinding}, we find an $\{a_1, a_2, ..., a_m\}$-assignment of $n_3 K_3$ with $a_1<a_2<...<a_m$, where we have $K_3(a_i,a_i,a_i)$ for every $i\in [m]$. Note that if $n_3=q_{m-1}$, then we need an $\{a_1, a_2, ..., a_m\}$-assignment with $a_1<a_2<...<a_{m-1}\le \frac{a_m}{2}$, e.g. we can let $a_1,a_2,...,a_{m-1}$ be the square roots of the first $m-1$ prime numbers, and let $a_m$ be the square root of some sufficiently large prime number. 

Let $\Delta$ be the smallest distance between an edge rank sum in $n_3 K_3$ and another rank sum (edge or nonedge) in $n_3 K_3$, and let $\epsilon=\frac{\Delta}{2(n_1+n_2)}$. We know $\Delta>0$, and hence $\epsilon>0$. By the definitions of $a_1, a_2, ..., a_m$ and $\epsilon$, we have that $a_1<a_1+\epsilon<a_2<a_2+\epsilon<...<a_m<a_m+\epsilon$. Denote $\{a_1,a_1+\epsilon,a_2,a_2+\epsilon,..., a_m,a_m+\epsilon\}$ by $\mathcal{A}$. Denote the set of $K_1$-clusters by $\{K_1^1, K_1^2, ..., K_1^{n_1}\}$ with each $K_1^i$ consisting of a single vertex $u_i$. Let the rank of $u_i$ be $\frac{a_1}{2}+i\epsilon$ for $i\in[n_1]$. Denote the set of $K_2$-clusters by $\{K_2^1, K_2^2, ..., K_2^{n_2}\}$ with each $K_2^j$ consisting of two vertices $v_j$, $w_j$, and an edge between them. Let the rank of $v_j$ be $\frac{a_1}{2}+(n_1+j)\epsilon$ and the rank of $w_j$ be $\frac{a_1}{2}-(n_1+j)\epsilon$ for $j\in[n_2]$.

If $q_{m-1}+1\le n_3\le q_m-1$, we will show that our rank assignment is an $(a_1,a_1+\epsilon,a_2,a_2+\epsilon,..., a_m,a_m+\epsilon)$-representation of $n_1 K_1\cup n_2 K_2\cup n_3 K_3$, which implies $\Theta(n_1 K_1\cup n_2 K_2\cup n_3 K_3)\le 2m$. Combining this result with Lemma \ref{lemmalowerbound}, we will get $\Theta(n_1 K_1\cup n_2 K_2\cup n_3 K_3)=2m$.

Now we need to do some tedious case-by-case calculations in order to check that every edge rank sum is greater than or equal to an odd number of elements in $\mathcal{A}$, and every nonedge rank sum is greater than or equal to an even number of elements in $\mathcal{A}$.

For rank sums involving both vertices in $n_3 K_3$, it is the same as what we have in the proof of Theorem \ref{ks3}: Every edge rank sum $a_i$ is greater than or equal to an odd number of elements in $\mathcal{A}$, which are $a_1, a_1+\epsilon, a_2, a_2+\epsilon, ..., a_i$. And by the definitions of $\Delta$ and $\epsilon$, every nonedge rank sum is greater than or equal to an even number of elements in $\mathcal{A}$, which are $a_1, a_1+\epsilon, a_2, a_2+\epsilon, ..., a_j, a_j+\epsilon$ for some $j\in [m]$.

For rank sums involving at least one vertex in $n_1 K_1\cup n_2 K_2$:

It is clear that every edge with at least one vertex in $n_1 K_1\cup n_2 K_2$ is actually an edge in some $K_2$-cluster, so its edge rank sum is $r(v_j)+r(w_j)=\frac{a_1}{2}+(n_1+j)\epsilon+\frac{a_1}{2}-(n_1+j)\epsilon=a_1$, which means $|\{\alpha\in\mathcal{A}:\alpha\le r(v_j)+r(w_j)\}|=1$, an odd number.

And for a nonedge rank sum involving at least one vertex in $n_1 K_1\cup n_2 K_2$:

\textbf{Case 1.} One vertex is in $n_3 K_3$, the other is in $n_1 K_1\cup n_2 K_2$.

Suppose $x\in n_3 K_3$, $y\in n_1 K_1\cup n_2 K_2$, and $xy$ is a nonedge. 

\textbf{Subcase 1.1.} $x\in K_3(a_1,a_1,a_1)$.

In this subcase, $r(x)=\frac{a_1}{2}$. So $r(x)+r(y)=a_1+i\epsilon$ with $i\in [n_1+n_2]$, or $r(x)+r(y)=a_1-j\epsilon$ with $j\in [n_1+1, n_1+n_2]$.

If $r(x)+r(y)=a_1+i\epsilon$ with $i\in [n_1+n_2]$, then $a_1<a_1+\epsilon\le r(x)+r(y)$, and $r(x)+r(y)\le a_1+(n_1+n_2)\epsilon=a_1+\frac{1}{2}\Delta<a_1+\Delta\le a_2$. So $|\{\alpha\in\mathcal{A}:\alpha\le r(x)+r(y)\}|=2$, an even number.

If $r(x)+r(y)=a_1-j\epsilon$ with $j\in [n_1+1, n_1+n_2]$, then $r(x)+r(y)<a_1$, so $|\{\alpha\in\mathcal{A}:\alpha\le r(x)+r(y)\}|=0$, an even number.

\textbf{Subcase 1.2.} $x\notin K_3(a_1,a_1,a_1)$.

In this subcase, we find another vertex $z\in K_3(a_1,a_1,a_1)$, so $r(z)=\frac{a_1}{2}$. We know $xz$ is a nonedge, so $|\{\alpha\in\mathcal{A}:\alpha\le r(x)+r(z)=r(x)+\frac{a_1}{2}\}|$ is an even number. We know $r(x)$ is either $\frac{a_i+a_j-a_{\ell}}{2}\le \frac{a_m+a_{m-1}-a_1}{2}$ for distinct $i,j,\ell\in [m]$, or $\frac{a_i}{2}\le \frac{a_m}{2}$ for some $i\in[2, m]$. So $r(x)+\frac{a_1}{2}<a_m$. And by the definition of $\Delta$, we know that for any $i\in[m]$, the distance between $r(x)+\frac{a_1}{2}$ (a nonedge rank sum) and $a_i$ (an edge rank sum) is at least $\Delta$. So either $a_s+\Delta\le r(x)+\frac{a_1}{2}\le a_{s+1}-\Delta$ for some $s\in [m-1]$, or $r(x)+\frac{a_1}{2}\le a_1-\Delta$.

\textbf{Subsubcase 1.2.1.} $a_s+\Delta\le r(x)+\frac{a_1}{2}\le a_{s+1}-\Delta$ for some $s\in [m-1]$.

If $r(y)=\frac{a_1}{2}+i\epsilon$ with $i\in[n_1+n_2]$, then $a_1<a_1+\epsilon<...<a_s+\epsilon<a_s+\Delta\le r(x)+\frac{a_1}{2}< r(x)+r(y)$, and $r(x)+r(y)\le r(x)+\frac{a_1}{2}+(n_1+n_2)\epsilon\le a_{s+1}-\Delta+(n_1+n_2)\epsilon=a_{s+1}-\Delta+\frac{1}{2}\Delta<a_{s+1}$. So $|\{\alpha\in\mathcal{A}:\alpha\le r(x)+r(y)\}|=2s$, an even number.

If $r(y)=\frac{a_1}{2}-j\epsilon$ with $j\in [n_1+1, n_1+n_2]$, then $a_1<a_1+\epsilon<...<a_s+\epsilon=a_s+\frac{\Delta}{2(n_1+n_2)}<a_s+\Delta-\frac{1}{2}\Delta=a_s+\Delta-(n_1+n_2)\epsilon\le a_s+\Delta-j\epsilon\le r(x)+\frac{a_1}{2}-j\epsilon=r(x)+r(y)$, and $r(x)+r(y)=r(x)+\frac{a_1}{2}-j\epsilon<r(x)+\frac{a_1}{2}\le a_{s+1}-\Delta<a_{s+1}$. So $|\{\alpha\in\mathcal{A}:\alpha\le r(x)+r(y)\}|=2s$, an even number.

\textbf{Subsubcase 1.2.2.} $r(x)+\frac{a_1}{2}\le a_1-\Delta$.

If $r(y)=\frac{a_1}{2}+i\epsilon$ with $i\in[n_1+n_2]$, then $r(x)+r(y)\le r(x)+\frac{a_1}{2}+(n_1+n_2)\epsilon\le a_1-\Delta+(n_1+n_2)\epsilon=a_1-\Delta+\frac{1}{2}\Delta<a_1$. So $|\{\alpha\in\mathcal{A}:\alpha\le r(x)+r(y)\}|=0$, an even number.

If $r(y)=\frac{a_1}{2}-j\epsilon$ with $j\in [n_1+1, n_1+n_2]$, then $r(x)+r(y)=r(x)+\frac{a_1}{2}-j\epsilon<r(x)+\frac{a_1}{2}\le a_1-\Delta<a_1$. So $|\{\alpha\in\mathcal{A}:\alpha\le r(x)+r(y)\}|=0$, an even number.

\textbf{Case 2.} Both vertices are in $n_1 K_1\cup n_2 K_2$.

Suppose the two vertices are $x$ and $y$, we know they are not in the same $K_2$-cluster because $xy$ is a nonedge. So we can assume $r(x)=\frac{a_1}{2}+i\epsilon$ and $r(y)=\frac{a_1}{2}+j\epsilon$ with $i,j\in [-(n_1+n_2), -(n_1+1)]\cup [1, n_1+n_2]$, $i\neq j$, and $i\neq -j$. So $r(x)+r(y)=a_1+k\epsilon$ with $k\in [-2(n_1+n_2)+1, 2(n_1+n_2)-1]\setminus \{0\}$.

If $k\in [-2(n_1+n_2)+1, -1]$, then $r(x)+r(y)=a_1+k\epsilon\le a_1-\epsilon<a_1$, so $|\{\alpha\in\mathcal{A}:\alpha\le r(x)+r(y)\}|=0$, an even number.

If $k\in [1, 2(n_1+n_2)-1]$, then $a_1<a_1+\epsilon\le 
a_1+k\epsilon=r(x)+r(y)$, and $r(x)+r(y)=a_1+k\epsilon\le a_1+(2(n_1+n_2)-1)\epsilon=a_1+\frac{2(n_1+n_2)-1}{2(n_1+n_2)}\Delta<a_1+\Delta\le a_2$. So $|\{\alpha\in\mathcal{A}:\alpha\le r(x)+r(y)\}|=2$, an even number.

We have completed the proof of the case $q_{m-1}+1\le n_3\le q_m-1$.

If $n_3=q_{m-1}$, we will show that our rank assignment is an $(a_1,a_1+\epsilon,a_2,a_2+\epsilon,..., a_m)$-representation of $n_1 K_1\cup n_2 K_2\cup n_3 K_3$, which implies $\Theta(n_1 K_1\cup n_2 K_2\cup n_3 K_3)\le 2m-1$. Combining this result with Lemma \ref{lemmalowerbound}, we will get $\Theta(n_1 K_1\cup n_2 K_2\cup n_3 K_3)=2m-1$. Note that in the case $n_3=q_{m-1}$, we have chosen $\{a_1,a_2,...,a_m\}\subset \R^+$ to be a linearly independent set over $\Q$ such that $a_1<a_2<...<a_{m-1}\le \frac{a_m}{2}$, and have chosen the $q_{m-1}$ $K_3$-clusters to be $K_3(a_i,a_i,a_i)$ with $i\in [m]$ and $K_3(a_i, a_j, a_\ell)$ with distinct $i, j, \ell\in [m-1]$.

Firstly, we check that no rank sum exceeds $a_m$ (so that we know in this case the threshold $a_m+\epsilon$ is not needed). 
The rank of a vertex in $n_3 K_3$ is either $\frac{a_i}{2}\le \frac{a_m}{2}$ for some $i\in [m]$, or $\frac{a_i+a_j-a_\ell}{2}\le \frac{a_{m-1}+a_{m-2}-a_1}{2}<\frac{a_m}{2}$ for distinct $i, j, \ell\in [m-1]$. So a rank sum involving both vertices in $n_3 K_3$ is at most $a_m$. Furthermore, the rank of a vertex in $n_1 K_1\cup n_2 K_2$ is at most $\frac{a_1}{2}+(n_1+n_2)\epsilon=\frac{a_1}{2}+\frac{1}{2}\Delta\le \frac{a_2}{2}\le \frac{a_m}{2}$, so a rank sum involving at least one vertex in $n_1 K_1\cup n_2 K_2$ is also at most $a_m$. 

Then, we denote $\{a_1,a_1+\epsilon,a_2,a_2+\epsilon,..., a_m\}$ by $\mathcal{A}'$, and the same as what we did in the case $q_{m-1}+1\le n_3\le q_m-1$, we can check that every edge rank sum is greater than or equal to an odd number of elements in $\mathcal{A}'$, and every nonedge rank sum is greater than or equal to an even number of elements in $\mathcal{A}'$.

The case $n_3=q_{m-1}$ is also proved.
\end{proof}

\section{Remarks}

For $K_{m_1}\cup K_{m_2}\cup ...\cup K_{m_k}$ and $K_{m_1,m_2,...,m_k}$, on the basis of Kittipassorn and Sumalroj's work \cite{KS}, we have determined their exact threshold numbers in the case $m_i\le 3$ for any $i\in [k]$; Chen and Hao \cite{CH22} have determined their exact threshold numbers in the case $m_i\ge k+1$ for any $i\in [k]$.

For general cluster graphs $\bigcup_{s=1}^t n_s K_s=n_1 K_1\cup n_2 K_2\cup ...\cup n_t K_t$, we can make the following observations. Similar conclusions for general complete multipartite graphs $K_{n_1\times 1, n_2\times 2, ..., n_t\times t}$ can also be drawn.

\begin{proposition}
    Let $n_1,n_2,...,n_t$ be nonnegative integers with $\sum_{i=1}^t n_i\ge 1$, and let $q_m=m+{m\choose 3}+1$.

    \qquad (I) If $\sum_{j=3}^t n_j\le 1$, then
    \[ \Theta(\bigcup_{s=1}^t n_s K_s)=\begin{cases} 
          0 & if\ \sum_{k=2}^t n_k=0, \\
          1 & if\ \sum_{k=2}^t n_k=1, \\
          2 & if\ \sum_{k=2}^t n_k\ge 2.
       \end{cases}
    \]

    \qquad (II) If $\sum_{j=3}^t n_j\ge 2$, then
    \[ \Theta(\bigcup_{s=1}^t n_s K_s)=\begin{cases} 
          2m-1 & if\ \sum_{j=3}^t n_j=q_{m-1}\ and\ \sum_{\ell=4}^t n_\ell\le m,\\
          2m & if\ q_{m-1}+1\le \sum_{j=3}^t n_j\le q_m-1\ and\ \sum_{\ell=4}^t n_\ell\le m.
       \end{cases}
    \]
\end{proposition}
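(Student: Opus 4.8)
The plan is to reduce the statement to the proofs of Theorem~\ref{main}, the only new ingredient being a way to absorb clusters of size at least $4$.

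Part (I) runs exactly as the proof of (II).(i) of Theorem~\ref{main}: when $\sum_{k=2}^t n_k=0$ there are no edges and $\Theta=0$; when $\sum_{k=2}^t n_k=1$ we put rank $1$ on the unique cluster of size $\ge 2$, rank $-2$ on the remaining isolated vertices, and $\theta_1=0$, which with Proposition~\ref{prop} and Lemma~\ref{lemmanK2} gives $\Theta=1$; and when $\sum_{k=2}^t n_k\ge 2$ we have $\Theta\ge 2$ from the induced $2K_2$, while for the upper bound we take thresholds $-\tfrac12,\tfrac12$, give every vertex of the (at most one) cluster of size $\ge 3$ rank $0$, and give the size-$1$ and size-$2$ clusters the ranks used in (II).(i). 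The big cluster is complete, so it has no internal non-edges, and a non-edge from one of its vertices to a vertex $v$ has rank sum $r(v)$, which is $>\tfrac12$ or $<-\tfrac12$; the rest of the check is unchanged.

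For part (II), write $N_3=\sum_{j=3}^t n_j$ and $N_4=\sum_{\ell=4}^t n_\ell$, so the hypotheses give $N_3\ge 2$ (hence $m\ge 2$), $q_{m-1}\le N_3\le q_m-1$ and $N_4\le m$. The lower bound costs nothing: selecting one triangle from each cluster of size $\ge 3$ displays $N_3K_3$ as an induced subgraph, so Proposition~\ref{prop} and Theorem~\ref{ks3} give $\Theta\ge 2m-1$ when $N_3=q_{m-1}$ and $\Theta\ge 2m$ when $q_{m-1}+1\le N_3\le q_m-1$. For the matching upper bound, fix a $\Q$-linearly independent set $a_1<\dots<a_m$ in $\R^+$ (with $a_{m-1}\le a_m/2$ in the case $N_3=q_{m-1}$) and assign ranks to the clusters of size $\ge 3$ as follows. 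Each cluster of size $\ge 4$ is made \emph{monochromatic}: it is given a private index $i\in[m]$ and every one of its vertices gets rank $a_i/2$, so all its internal edges have rank sum $a_i$ and it contributes no non-edge; this uses $N_4\le m$ of the $m$ monochromatic types. The remaining $N_3-N_4$ clusters have size exactly $3$ and are given the leftover monochromatic types $K_3(a_i,a_i,a_i)$ together with trichromatic types $K_3(a_i,a_j,a_\ell)$, one per cluster, exactly as in Lemma~\ref{lemmafinding}; the bound $N_3\le q_m-1$ (respectively $N_3=q_{m-1}$, with trichromatic indices restricted to $[m-1]$) is precisely what makes enough types available. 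Finally the size-$1$ and size-$2$ clusters receive the $\epsilon$-perturbed ranks near $a_1/2$ from the proof of (II).(ii) of Theorem~\ref{main}, with $\epsilon>0$ a suitably small multiple of the least gap between an edge rank sum and any other rank sum occurring among the size-$\ge 3$ vertices, and we take thresholds $a_1,a_1+\epsilon,\dots,a_m,a_m+\epsilon$, omitting the last one when $N_3=q_{m-1}$.

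The verification then copies the proof of (II).(ii) of Theorem~\ref{main} nearly verbatim, the key observation being that a monochromatic cluster of any size $\ge 3$ with index $i$ interacts with the rest of the graph exactly like a $K_3(a_i,a_i,a_i)$: all its vertices carry the single rank $a_i/2$, so the collection of rank sums occurring among the size-$\ge 3$ vertices is a sub-collection of the rank sums of the $\{a_1,\dots,a_m\}$-assignment of $N_3K_3$ obtained by shrinking each size-$\ge 3$ cluster to one of its triangles. Hence the non-coincidence lemma of Kittipassorn and Sumalroj applies to that $N_3K_3$, the edge and non-edge rank sums among the size-$\ge 3$ vertices are disjoint, the gap $\Delta$ defining $\epsilon$ is positive, and the case analysis for a non-edge meeting a size-$1$ or size-$2$ vertex goes through word for word. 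I expect the one delicate point --- and the sole place the hypothesis $N_4\le m$ is used --- to be the bookkeeping of types: monochromatic types must be pairwise distinct across \emph{all} clusters of size $\ge 3$ (two of them sharing index $i$ would create a non-edge of rank sum $a_i$, equal to an edge rank sum), there are only $m$ such types, and each cluster of size $\ge 4$ is put into monochromatic form, so $N_4\le m$ is exactly the pigeonhole condition needed. In the sub-case $N_3=q_{m-1}$ one checks in addition, as in Lemma~\ref{lemmaks1}, that no rank sum exceeds $a_m$ --- immediate since every monochromatic cluster has vertex ranks $a_i/2\le a_m/2$, the trichromatic clusters use indices in $[m-1]$ with $a_{m-1}\le a_m/2$, and the size-$1$/size-$2$ ranks stay near $a_1/2$ --- so that the threshold $a_m+\epsilon$ can indeed be dropped.
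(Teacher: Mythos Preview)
Your proposal is correct and follows essentially the same approach as the paper's own sketch: the paper too reduces (I) to the proof of (II).(i) of Theorem~\ref{main}, and for (II) it says precisely that each $K_\ell$-cluster with $\ell\ge 4$ should be made into a monochromatic $K_\ell(a_i,\dots,a_i)$ with a private index $i\in[m]$, after which these clusters behave exactly like the $K_3(a_i,a_i,a_i)$'s in the proof of (II).(ii). You have simply supplied the details the paper leaves implicit --- the pigeonhole reading of the hypothesis $\sum_{\ell\ge 4}n_\ell\le m$, the observation that shrinking each big cluster to a triangle recovers an $\{a_1,\dots,a_m\}$-assignment of $N_3K_3$ so that the non-coincidence lemma and the $\Delta$/$\epsilon$ machinery transfer verbatim, and the check that no rank sum exceeds $a_m$ in the $N_3=q_{m-1}$ case.
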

We can check (I) as what we did in Section 2. And for (II), the basic idea is: If $\sum_{\ell=4}^t n_\ell\le m$, then for each $K_\ell$-cluster with $\ell\in [4,t]$, we can associate it with a distinct $i\in [m]$, and make it a $K_\ell(\underbrace{a_i,a_i,...,a_i}_{\ell})$, which means we let all its edge rank sums be $a_i$. And then these $K_\ell$-clusters will behave similarly to those $K_3(a_i,a_i,a_i)$'s in Section 3 and Section 4.

For the exact threshold numbers of specific graphs, we can consider the following problems.

\begin{problem}
    Determine the exact threshold number of the $m\times n$ grid $P_m\square P_n$.
\end{problem}

\begin{problem}
    Determine the exact threshold numbers of spiders, where a spider is a tree with exactly one vertex having degree $\ge 3$.
\end{problem}

For the cluster graphs (and the complete multipartite graphs), as Sumalroj \cite{Su} also suggested, we may take $\Theta(n_3 K_3\cup n_4 K_4)$ or $\Theta(n K_5)$ to be the next goal.

According to our current results, it is predictable that the exact threshold numbers of general cluster graphs/complete multipartite graphs will not be in a very neat form. So, finding some asymptotic results for $\Theta(\bigcup_{s=1}^t n_s K_s)$ and $\Theta(K_{n_1\times 1, n_2\times 2, ..., n_t\times t})$ could also be a feasible future goal.


\begin{thebibliography}{10}

\bibitem{CH22}
G.~Chen and Y.~Hao.
\newblock Multithreshold multipartite graphs.
\newblock {\em J. Graph Theory}, 100(4):727--732, 2022.

\bibitem{CH77}
V.~Chv\'{a}tal and P.~L. Hammer.
\newblock Aggregation of inequalities in integer programming.
\newblock In {\em Studies in integer programming ({P}roc. {W}orkshop, {B}onn, 1975)}, volume Vol. 1 of {\em Ann. Discrete Math.}, pages 145--162. North-Holland, Amsterdam-New York-Oxford, 1977.

\bibitem{Go}
M.~C. Golumbic.
\newblock {\em Algorithmic graph theory and perfect graphs}, volume~57 of {\em Annals of Discrete Mathematics}.
\newblock Elsevier Science B.V., Amsterdam, second edition, 2004.
\newblock With a foreword by Claude Berge.

\bibitem{GJ}
M.~C. Golumbic and R.~E. Jamison.
\newblock Rank-tolerance graph classes.
\newblock {\em J. Graph Theory}, 52(4):317--340, 2006.

\bibitem{GT}
M.~C. Golumbic and A.~N. Trenk.
\newblock {\em Tolerance graphs}, volume~89 of {\em Cambridge Studies in Advanced Mathematics}.
\newblock Cambridge University Press, Cambridge, 2004.

\bibitem{HMP}
P.~L. Hammer, N.~V.~R. Mahadev, and U.~N. Peled.
\newblock Some properties of {$2$}-threshold graphs.
\newblock {\em Networks}, 19(1):17--23, 1989.

\bibitem{JS20}
R.~E. Jamison and A.~P. Sprague.
\newblock Multithreshold graphs.
\newblock {\em J. Graph Theory}, 94(4):518--530, 2020.

\bibitem{JS22}
R.~E. Jamison and A.~P. Sprague.
\newblock Double-threshold permutation graphs.
\newblock {\em J. Algebraic Combin.}, 56(1):23--41, 2022.

\bibitem{KS}
T.~Kittipassorn and T.~Sumalroj.
\newblock Multithreshold multipartite graphs with small parts.
\newblock {\em Discrete Math.}, 347(7):Paper No. 113979, 15, 2024.

\bibitem{MP}
N.~V.~R. Mahadev and U.~N. Peled.
\newblock {\em Threshold graphs and related topics}, volume~56 of {\em Annals of Discrete Mathematics}.
\newblock North-Holland Publishing Co., Amsterdam, 1995.

\bibitem{Pu}
G.~J. Puleo.
\newblock Some results on multithreshold graphs.
\newblock {\em Graphs Combin.}, 36(3):913--919, 2020.

\bibitem{Su}
T.~Sumalroj.
\newblock Threshold numbers of some complete multipartite graphs and their complements.
\newblock {\em Ph.D. Dissertation, Chulalongkorn University}, 2022.

\bibitem{Wa}
R.~Wang.
\newblock A note on the threshold numbers of cycles.
\newblock {\em arXiv preprint arXiv:2406.13955}, 2024.

\end{thebibliography}
\end{document}